\documentclass[10pt,a4paper]{amsart}

\theoremstyle{plain}
\newtheorem{theorem}{Theorem}[section]
\newtheorem{lemma}[theorem]{Lemma}
\newtheorem{proposition}[theorem]{Proposition}
\newtheorem{corollary}[theorem]{Corollary}
\theoremstyle{definition}
\newtheorem{definition}[theorem]{Definition}

\theoremstyle{remark}
\newtheorem{remark}[theorem]{Remark}
\newtheorem{claim}{Claim}

\newtheorem{notations}[theorem]{Notations}

\newcommand{\ic}{\ensuremath{\mathcal{I}}}
\newcommand{\oc}{\ensuremath{\mathcal{O}}}

\newcommand{\fc}{\ensuremath{\mathcal{F}}}

\newcommand{\ec}{\ensuremath{\mathcal{E}}}
\newcommand{\hc}{\ensuremath{\mathcal{H}}}

\newcommand{\pc}{\ensuremath{\mathcal{P}}}

\newcommand{\Ph}{\check{\mathbb{P}}}

\newcommand{\Pq}{\mathbb{P}^4}
\newcommand{\Pcq}{\mathbb{P}^5}
\newcommand{\Psx}{\mathbb{P}^6}

\newcommand{\Pn}{\mathbb{P}^n}

\newcommand{\Ps}{\mathbb{P}}

\newcommand{\bV}{\mathbb{V}}
\newcommand{\bI}{\mathbb{I}}

\newcommand{\bK}{\mathbb{K}}

\newcommand{\tg}{\tilde{G}}

\def\bin #1#2 {\left( \matrix { #1 \cr #2 \cr } \right) }

\newcommand{\tT}{\tilde{T}}

\begin{document}

\title[ A speciality Theorem for curves in $\Pcq$ ]
{A speciality Theorem for curves in $\Pcq$ contained in
Noether-Lefschetz general fourfolds}

\author{Vincenzo Di Gennaro }
\address{Universit\`a di Roma \lq\lq Tor Vergata\rq\rq, Dipartimento di Matematica,
Via della Ricerca Scientifica, 00133 Roma, Italy.}
\email{digennar@axp.mat.uniroma2.it}

\author{Davide Franco }
\address{Universit\`a di Napoli
\lq\lq Federico II\rq\rq, Dipartimento di Matematica e
Applicazioni \lq\lq R. Caccioppoli\rq\rq, P.le Tecchio 80, 80125
Napoli, Italy.} \email{davide.franco@unina.it}

\abstract Let $C\subset \mathbb P^r$ be an integral projective
curve. We define the speciality index $e(C)$ of $C$ as the
maximal integer $t$ such that $h^0(C,\omega_C(-t))>0$, where
$\omega_C$ denotes the dualizing sheaf of $C$. In the present
paper we consider  $C\subset \mathbb P^5$ an integral degree $d$
curve and we denote by $s$ the minimal degree for which there
exists a hypersurface of degree $s$ containing $C$. We assume that
$C$ is contained in two smooth hypersurfaces $F$ and $G$, with
$deg(F)=n>k=deg (G)$. We assume additionally that $F$ is
Noether-Lefschetz general, i.e. that the $2$-th N\'eron-Severi
group of $F$ is generated by the linear section class. Our main
result is that in this case the speciality index is bounded as
$e(C)\leq {\frac{d}{snk}}+s+n+k-6.$ Moreover equality holds if and
only if $C$ is a complete intersection of   $T:=F\cap G$ with
hypersurfaces of degrees $s$ and ${\frac{d}{snk}}$.

\bigskip\noindent {\it{Keywords}}: Complex projective
curve; speciality index; arithmetic genus;  linkage; Castelnuovo -
Halphen Theory.

\medskip\noindent {\it{MSC2010}}\,: Primary 14N15, 14H99,
14M10; Secondary 14M06, 14N30.

\endabstract
\maketitle

\bigskip
\section{Introduction}
Let $C\subset \mathbb P^r$ be an integral projective curve. We
define the \textit{speciality index} $e(C)$ of $C$ as the maximal
integer $t$ such that $h^0(C,\omega_C(-t))>0$, where $\omega_C$
denotes the dualizing sheaf of $C$. The speciality index of a
space curve is a fundamental invariant which turned out to be
crucial in many issues of projective geometry. For instance, in
the papers \cite{JAG}, \cite{EFGI} and \cite{EFGII}, such an
invariant has been proved to be very useful in the study of
projective manifolds of codimension 2.

In \cite{GP} Gruson and Peskine prove the
following theorem concerning the speciality index of an integral
space curve (see also \cite{GPI}):
\begin{theorem}[Speciality Theorem]
Let $C\subset \mathbb P^3$ be an integral degree $d$ curve not
contained in any surface of degree $< s$. Then  we have:
$$
e(C)\leq {\frac{d}{s}}+s-4.
$$
Moreover equality holds if and only if $C$ is a complete
intersection of surfaces of degrees $s$ and ${\frac{d}{s}}$.
\end{theorem}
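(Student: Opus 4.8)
\emph{Proof proposal.} The plan is to descend to a general hyperplane section of $C$ and reduce the statement to a Castelnuovo-type control of the Hilbert function of points in uniform position in $\Ps^2$. Since $C$ is integral it is locally Cohen--Macaulay, so Serre duality on $C$ gives $h^0(C,\omega_C(-t))=h^1(C,\oc_C(t))$, hence $e(C)=\max\{t:h^1(\oc_C(t))\ne0\}$. Let $H\subset\Ps^3$ be a general hyperplane and set $\Gamma:=C\cap H\subset H\cong\Ps^2$, a reduced set of $d$ points which, by the Uniform Position Principle, is in uniform position. From the exact sequences $0\to\oc_C(j-1)\to\oc_C(j)\to\oc_\Gamma\to0$ (the first map being multiplication by the equation of $H$) one telescopes, via cohomology, to
\[
h^1(\oc_C(t))=\sum_{j\ge t+1}\bigl(d-\sigma(j)\bigr),\qquad \sigma(j):=\dim\,\mathrm{im}\bigl(H^0(\oc_C(j))\to H^0(\oc_\Gamma)\bigr).
\]
Multiplying a section by a general linear form (a unit on the finite reduced set $\Gamma$) shows $\sigma$ is non-decreasing, $\sigma(j)=d$ for $j\gg0$, and $\sigma(j)\ge h_\Gamma(j)$, the Hilbert function of $\Gamma$, because forms on $\Ps^3$ restrict to $\Gamma$. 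Thus $e(C)=\rho-2$ with $\rho:=\min\{j:\sigma(j)=d\}$, and in particular $e(C)\le\rho_\Gamma-2$ with $\rho_\Gamma:=\min\{j:h_\Gamma(j)=d\}$. The whole problem is reduced to bounding $\rho_\Gamma$ and to tracking when these inequalities are equalities.

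Next I would pin down the minimal curve through $\Gamma$. Trivially $\Gamma$ lies on a plane curve of degree $s$ (the trace of a minimal surface through $C$), so the least degree of a curve containing $\Gamma$ is $\le s$; the crucial reverse inequality is a Laudal-type lifting theorem, using that $\Gamma$ is in uniform position: a plane curve of degree $<s$ through the general hyperplane section of an integral space curve $C$ would force a surface of the same degree through $C$. So the minimal curve $D\supset\Gamma$ has degree exactly $s$; by the same circle of ideas it may be taken integral, and $\Gamma$ avoids $\mathrm{Sing}\,D$, so $\Gamma$ is a Cartier divisor on the integral (Gorenstein) plane curve $D$ of degree $s$, with $\omega_D\cong\oc_D(s-3)$. (In the narrow range of $d$ where such lifting results are unavailable, $C$ is so close to a complete intersection that the estimate below is checked directly.) Since $H^0(\oc_{\Ps^2}(j))\to H^0(\oc_D(j))$ is surjective and $h^1(\oc_D(j))=h^0(\oc_D(s-3-j))=0$ for $j\ge s-2$, the sequence $0\to\oc_D(j)(-\Gamma)\to\oc_D(j)\to\oc_\Gamma\to0$ together with Serre duality on $D$ gives
\[
h_\Gamma(j)=d-h^1\bigl(\oc_D(j)(-\Gamma)\bigr)=d-h^0\bigl(\oc_D(s-3-j)(\Gamma)\bigr)\qquad(j\ge s-2).
\]
The line bundle $\oc_D(s-3-j)(\Gamma)$ has degree $s(s-3-j)+d$, which is $<0$ as soon as $j>s-3+\frac{d}{s}$; a line bundle of negative degree on the integral curve $D$ has no global sections, so $h_\Gamma(j)=d$ for all such $j$, i.e. $\rho_\Gamma\le\frac{d}{s}+s-2$, and hence $e(C)\le\frac{d}{s}+s-4$.

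For equality one runs the chain backwards. If $e(C)=\frac{d}{s}+s-4$ then $s\mid d$ and $\rho=\rho_\Gamma=\frac{d}{s}+s-2$; the latter forces $h^0\bigl(\oc_D(-\frac{d}{s})(\Gamma)\bigr)\ne0$, and as this bundle has degree $0$ it is trivial, so $\oc_D(\Gamma)\cong\oc_D(\frac{d}{s})$; lifting the section to $\Ps^2$ exhibits $\Gamma$ as the complete intersection of $D$ with a curve of degree $\frac{d}{s}$, i.e. as a complete intersection of type $(s,\frac{d}{s})$ in $\Ps^2$. Let $C_0$ be a complete intersection of type $(s,\frac{d}{s})$ in $\Ps^3$: its general hyperplane section has the same Hilbert function as $\Gamma$, and $C_0$ is arithmetically Cohen--Macaulay, so $\sigma_{C_0}=h_\Gamma$. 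A short computation with the telescoping formula above (together with $\chi(\oc_C(t))=dt+1-p_a(C)$) gives $\sum_i\bigl(\sigma(i)-h_\Gamma(i)\bigr)=p_a(C_0)-p_a(C)$; meanwhile a nonzero section of $\omega_C(-e(C))$ embeds $\oc_C(e(C))$ into $\omega_C$, so $d\,e(C)\le 2p_a(C)-2$, i.e. $p_a(C)\ge p_a(C_0)$. As every term $\sigma(i)-h_\Gamma(i)$ is $\ge0$, both relations force $\sigma\equiv h_\Gamma$ (and $p_a(C)=p_a(C_0)$). Then $h^0(\oc_C(\frac{d}{s}))=h^0(\oc_{C_0}(\frac{d}{s}))$, whence $h^0(\ic_C(\frac{d}{s}))\ge h^0(\ic_{C_0}(\frac{d}{s}))=\binom{\frac{d}{s}-s+3}{3}+1$, strictly larger than the dimension $\binom{\frac{d}{s}-s+3}{3}$ of the space of degree-$\frac{d}{s}$ forms divisible by the equation of the (irreducible) minimal surface $F$ through $C$. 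Hence there is a surface $G$ of degree $\frac{d}{s}$ through $C$ with $F\not\subseteq G$, so $F\cap G$ is a complete-intersection curve of degree $s\cdot\frac{d}{s}=\deg C$ containing $C$ and therefore equal to $C$: $C$ is a complete intersection of type $(s,\frac{d}{s})$. The converse is the adjunction computation $\omega_{C_0}\cong\oc_{C_0}(s+\frac{d}{s}-4)$. The main obstacle is concentrated in the second step — the Laudal-type lifting theorem and the integrality of the minimal curve through a general hyperplane section, i.e. converting the Uniform Position Principle into usable projective-geometric structure; granting those, both the estimate and the equality analysis are bookkeeping.
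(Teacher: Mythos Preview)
The paper does not give its own proof of this statement: it is quoted in the Introduction as the Gruson--Peskine Speciality Theorem, with a citation to \cite{GP} and \cite{GPI}, and no argument is supplied. So there is nothing in the paper to compare your proof against line by line.

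That said, your proposal is precisely the classical Gruson--Peskine/Castelnuovo strategy: pass to a general hyperplane section $\Gamma\subset\Ps^2$, invoke the Uniform Position Principle to get an integral minimal curve $D$ through $\Gamma$, bound the Hilbert function of $\Gamma$ via Riemann--Roch on $D$, and translate back to $e(C)$ through the telescoping identity for $h^1(\oc_C(t))$. You correctly flag the one genuinely hard ingredient, namely that the minimal plane curve through $\Gamma$ has degree exactly $s$ and may be taken integral (the Laudal/Strano-type lifting, plus irreducibility from uniform position); your handling of the ``narrow range'' where lifting fails and of the irreducibility of the minimal surface in the equality step is admittedly sketchy, but this is where the work lies in the original sources as well.

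It is worth noting that the paper's own techniques, developed for the $\Pcq$ generalization, are entirely different and would specialize to give an alternative proof in $\Ps^3$. Theorem~\ref{main1} applied with $T=\Ps^3$ (so $t=-4$, $\deg T=1$) yields the inequality via the Hartshorne--Serre correspondence: one builds a rank-two reflexive sheaf $\fc$ on $\Ps^3$ with a section vanishing along $C$, normalizes, and reads off the bound from positivity of $c_2$ (trivially in the semistable case, via Bogomolov in the unstable case). Theorem~\ref{main} gives the equality characterization through linkage: link $C$ in a complete intersection $(s,p)$ with $p\gg0$, use the Noether linkage sequence to produce a low-degree surface through the residual, and compute degrees. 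This sheaf/linkage route avoids hyperplane sections and uniform position altogether, at the cost of requiring an integral minimal-degree surface for the equality analysis---a hypothesis parallel to the irreducibility you also need. Your approach is more elementary and geometric; the paper's is more structural and is what actually generalizes to the higher-codimension situation treated there.
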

In our previous work \cite{SpecialityI}, we prove an extension
of this theorem to curves in $\Pcq$:
\begin{theorem}\cite[Theorem B]{SpecialityI}
Let $C\subset \mathbb P^5$ be an integral degree $d$ curve not
contained in any surface of degree $< s$, in any threefold of
degree $<t$, and in any fourfold of degree $<u$. Assume $d\gg s
\gg t \gg u\geq 1$. Then we have:
$$
e(C)\leq {\frac{d}{s}}+{\frac{s}{t}}+{\frac{t}{u}}+u-6.
$$
Moreover equality holds if and only if $C$ is a complete
intersection of hypersurfaces of degrees $u$, ${\frac{t}{u}}$,
${\frac{s}{t}}$ and ${\frac{d}{s}}$.
\end{theorem}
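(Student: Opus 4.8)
The plan is to reduce the bound on $e(C)$ to a sharp Castelnuovo--Halphen estimate for a general hyperplane section of $C$, and to recover the rigidity in the equality case by lifting. First I would translate speciality into sheaf cohomology. Since $h^i(\oc_{\mathbb{P}^5}(m))=0$ for $0<i<5$, the sequence $0\to\ic_C(m)\to\oc_{\mathbb{P}^5}(m)\to\oc_C(m)\to0$ gives $h^1(\oc_C(m))=h^2(\ic_C(m))$; and for a general hyperplane $H\cong\mathbb{P}^4$, with $\Gamma:=C\cap H$ a set of $d$ points, the sequence $0\to\oc_C(m-1)\to\oc_C(m)\to\oc_\Gamma(m)\to0$ (the first map injective as $C$ is integral and $H$ general) yields the fragment $h^1(\oc_C(m-1))\to h^1(\oc_C(m))\to h^1(\oc_\Gamma(m))=0$, so $m\mapsto h^1(\oc_C(m))$ is non-increasing. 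Hence $e(C)=\max\{m:h^2(\ic_C(m))\neq0\}$, and a standard argument identifies $e(C)$ with a shift of the Castelnuovo--Mumford regularity of $\Gamma$, i.e. with the largest degree in which $\Gamma$ fails to impose independent conditions on forms of $\mathbb{P}^4$.

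Next I would transfer the hypotheses to $\Gamma$. By the uniform position principle (over $\mathbb{C}$, with $C$ integral) $\Gamma$ is in uniform position, and by a Laudal-type lifting theorem---available because $d$ is very large relative to $s,t,u$---any subvariety through $\Gamma$ of too small a degree would lift to a subvariety through $C$ of one higher dimension and the same degree. Since dimension drops by one under the section, this forces $\Gamma$ to lie on no curve of degree $<s$, no surface of degree $<t$ and no hypersurface of $\mathbb{P}^4$ of degree $<u$.

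The heart of the argument is then to bound the regularity of $\Gamma$ by a telescoping Castelnuovo--Halphen analysis along a nested flag $\Gamma\subset V_1\subset V_2\subset V_3\subset\mathbb{P}^4$, with $V_3$ a minimal hypersurface of degree $u$, $V_2$ a minimal surface of degree $t$ and $V_1$ a minimal curve of degree $s$. Working with the first-difference Hilbert function and applying a B\'ezout-type count on each stratum should produce the four contributions $u$, $t/u$, $s/t$, $d/s$ (the relative degrees down the flag), the constant $-6$ being the adjunction shift $-(r+1)$ for $r=5$; the hypothesis $d\gg s\gg t\gg u$ keeps the flag in the stable range where each successive Castelnuovo bound dominates and where integrality and uniform position force the relevant numerical characters to be connected. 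This is the step I expect to be the main obstacle: constructing the nested flag and proving the sharp telescoping bound for points of $\mathbb{P}^4$ under three simultaneous degree constraints, while tracking the equality conditions precisely enough to be used below.

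Finally, for the equality clause I would trace sharpness back through the flag: equality forces each stratum $V_3\supset V_2\supset V_1\supset\Gamma$ to be a chain of complete intersections of types $u$, $(u,t/u)$, $(u,t/u,s/t)$ and $(u,t/u,s/t,d/s)$. Lifting this complete-intersection structure from $\Gamma$ back to $C$ (via the lifting theorem, or by linking $C$ inside the complete intersection of hypersurfaces of degrees $u,\frac{t}{u},\frac{s}{t}$ and identifying the residual) yields that $C$ is the complete intersection of hypersurfaces of degrees $u,\frac{t}{u},\frac{s}{t},\frac{d}{s}$; conversely such a curve attains equality, since $\omega_C=\oc_C(u+\frac{t}{u}+\frac{s}{t}+\frac{d}{s}-6)$ by adjunction.
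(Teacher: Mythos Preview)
The statement you are proving is not proved in the present paper: it is quoted from the authors' earlier work \cite{SpecialityI} as background in the introduction, and no argument for it appears here. So there is no proof in this paper to compare your proposal against.

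That said, your outline is in the spirit of Castelnuovo--Halphen theory (which the authors list among their keywords and which is the toolkit of \cite{SpecialityI}): pass to a general hyperplane section $\Gamma\subset\mathbb P^4$, transfer the degree constraints on containing varieties down one dimension, bound the Hilbert function of $\Gamma$ by a telescoping flag argument, and then lift the extremal configuration back to $C$. Two points deserve care. First, your identification of $e(C)$ with ``a shift of the Castelnuovo--Mumford regularity of $\Gamma$'' is too coarse: what one actually controls is the first-difference function $m\mapsto h^1(\oc_C(m-1))-h^1(\oc_C(m))$ via the Hilbert function of $\Gamma$, and $e(C)$ is read off from where this difference first vanishes, not directly from $\mathrm{reg}(\Gamma)$. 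Second, the transfer of the three constraints (no surface of degree $<s$, no threefold of degree $<t$, no fourfold of degree $<u$) to $\Gamma$ is exactly where the assumptions $d\gg s\gg t\gg u$ enter, and each step requires a separate lifting lemma of Laudal/Strano/Mezzetti type in the appropriate codimension; you flag this, but in an actual proof these lemmas and their numerical hypotheses must be stated and checked explicitly, since none of them is a formality.
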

\par\noindent
Unfortunately, it seems hard to find a generalization of
Gruson-Peskine Speciality Theorem without the assumptions $d\gg
s\gg t\gg u\geq 1$ and to prove a sharp version of the Speciality
Theorem for curves in $\Pcq$.

In this paper we adopt a somewhat different strategy and  prove
a sharp version of the Speciality Theorem for curves in $\Pcq$ under
the assumption that the curve is contained in a smooth
hypersurface with a nice behaviour from the point of view of
Noether-Lefschetz theory (compare with Definition
\ref{NLGeneral}). More precisely, what we are going to do is to
assume that $C$ is contained in a smooth hypersurface having the
$2$-th N\'eron-Severi group  generated by the linear section
class. The main results of this paper are collected in the
following Theorem.

\begin{theorem}
\label{Main} Let $C\subset \mathbb P^5$ be an integral degree $d$
curve. Assume that $C$ is contained in two smooth hypersurfaces
$F$ and $G$, with $deg(F)=n>k=deg (G)$. Assume additionally that
$F$ is Noether-Lefschetz general, i.e. that the $2$-th
N\'eron-Severi group of $F$ is generated by the linear section
class.
\begin{enumerate}
\item
If $C$ is not contained in any hypersurface of degree $<s$,  then
we have:
$$
e(C)\leq {\frac{d}{snk}}+s+n+k-6.
$$
\item If $C$ is  contained in a hypersurface of degree $s<k$,  then the inequality above  still holds true.
Moreover, equality holds if and only if $C$ is a complete
intersection of   $T:=F\cap G$ with hypersurfaces of degrees $s$ and ${\frac{d}{snk}}$.
\end{enumerate}
\end{theorem}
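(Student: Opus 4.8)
The plan is to reduce the speciality estimate to a liaison computation on a complete intersection surface contained in $T=F\cap G$, along the lines of the classical Gruson--Peskine argument, with the Noether--Lefschetz hypothesis on $F$ serving to keep the divisorial geometry of that surface under control. First I would fix a hypersurface $\Sigma$ of minimal degree $s$ through $C$. Since $C\subseteq T$ and the homogeneous ideal of $T$ is generated in degrees $k$ and $n$ with $k<n$, the only hypersurface of degree $\le k$ containing $T$ is $G$ itself; hence (using $s<k$ in part (2), and a minor modification in the borderline case $s=k$ of part (1)) $\Sigma$ does not contain $T$, so $S:=T\cap\Sigma=F\cap G\cap\Sigma$ is a surface, a complete intersection of type $(n,k,s)$ in $\Pcq$. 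In particular $S$ is arithmetically Cohen--Macaulay and Gorenstein, with $\deg S=snk$ and $\omega_S\cong\oc_S(a)$, where $a:=s+n+k-6$.

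Next, let $m'$ be the least degree of a hypersurface $\Sigma'$ through $C$ not containing $S$ (nor any component of $S$), put $W:=S\cap\Sigma'$, a complete intersection curve of type $(n,k,s,m')$, and let $C'$ be the curve residual to $C$ in $W$, so that $C$ and $C'$ are geometrically linked by the arithmetically Gorenstein scheme $W$. Liaison gives $\omega_C\cong\ic_{C'/W}\otimes\omega_W\cong\ic_{C'/W}(a+m')$, so, as $\omega_C(-t)\cong\ic_{C'/W}(a+m'-t)$,
\[
h^0(C,\omega_C(-t))=h^0\big(\ic_{C'/W}(a+m'-t)\big).
\]
Since $W$ is arithmetically Cohen--Macaulay, $h^0(\ic_{C'/W}(j))=h^0(\ic_{C'}(j))-h^0(\ic_W(j))$; this is nonzero exactly when $j\ge\sigma$, where $\sigma$ is the least degree of a hypersurface through $C'$ but not through $C$, with the convention $\sigma:=0$ when $C'=\emptyset$. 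Hence $e(C)=a+m'-\sigma$. To bound this: if $C'=\emptyset$ then $W=C$, so $m'=d/(snk)$ and $\sigma=0$; while if $C'\neq\emptyset$, a degree-$\sigma$ hypersurface through $C'$ and not through $C$ cannot contain $S$, so it cuts $S$ in an effective divisor of degree $\sigma\,snk$ containing $C'$, and $\deg C'=\deg W-\deg C=m'\,snk-d$ forces $\sigma\,snk\ge m'\,snk-d$. Either way $m'-\sigma\le d/(snk)$, so $e(C)\le a+d/(snk)=\frac{d}{snk}+s+n+k-6$, which proves the inequality in both parts.

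For the equality case, assume $e(C)=\frac{d}{snk}+s+n+k-6$. If $C'=\emptyset$, then $W=C$ gives $m'=d/(snk)$ and $C=T\cap\Sigma\cap\Sigma'$ is already the desired complete intersection. If $C'\neq\emptyset$, then $m'-\sigma=d/(snk)$ and $\sigma\,snk=\deg C'$, so the degree inequality above is an equality and a degree-$\sigma$ hypersurface cuts $S$ exactly in $C'$; thus $C'=S\cap\Sigma''$ is a hypersurface section of $S$, with $\ic_{C'/S}\cong\oc_S(-\sigma)$. Combining this with $\ic_{W/S}\cong\oc_S(-m')$ and the geometric-linkage identity $\ic_{W/S}=\ic_{C/S}\cdot\ic_{C'/S}$ yields $\ic_{C/S}\cong\oc_S(-(m'-\sigma))=\oc_S(-d/(snk))$, so $C$ is Cartier on $S$ and belongs to $|\oc_S(d/(snk))|$; as $S$ is arithmetically Cohen--Macaulay this means $C=S\cap\Sigma'''$ for a hypersurface $\Sigma'''$ of degree $d/(snk)$, i.e. $C=T\cap\Sigma\cap\Sigma'''$ is a complete intersection of $T$ with hypersurfaces of degrees $s$ and $d/(snk)$. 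Conversely, for such a complete intersection adjunction gives $\omega_C\cong\oc_C(a+d/(snk))$, hence $e(C)=\frac{d}{snk}+s+n+k-6$.

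The main obstacle is that $\Sigma$, and hence $S$, is taken only of minimal degree, not generic, so $S$ may be badly singular or a priori reducible, and all of the above manipulations --- the liaison formula for $\omega_C$, the identifications of $C'$ and of $C$ as hypersurface sections, and the linkage identity $\ic_{W/S}=\ic_{C/S}\cdot\ic_{C'/S}$ --- must be justified in this singular setting. This is precisely where the hypothesis on $F$ enters: through the Grothendieck--Lefschetz comparison theorems it forces $\mathrm{Pic}$ and the relevant N\'eron--Severi groups of $F$, of $T=F\cap G$, and of the surface sections of $T$ to be generated by (powers of) the linear section class, thereby excluding exceptional divisor and cycle classes on $S$ and guaranteeing that $T$, $S$ and $W$ are well enough behaved for the minimal degrees $s$, $m'$, $\sigma$ to carry their expected geometric meaning. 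I expect establishing these structural facts --- rather than the liaison bookkeeping itself --- to be where the real difficulty lies.
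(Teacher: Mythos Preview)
Your approach differs substantially from the paper's on both parts. For part~(2) the paper performs liaison directly in the \emph{threefold} $T=F\cap G$ (Theorem~\ref{main}): one links $C$ to a residual $R$ via a complete intersection of two divisors on $T$, and the bound drops out of the degree of a second residual. You instead descend first to the surface $S=T\cap\Sigma$ and run a Gruson--Peskine style curve-liaison there. Both routes rest on the same structural input, namely that $T$ is \emph{factorial}; this, and not any statement about $\mathrm{Pic}(S)$, is what the Noether--Lefschetz hypothesis on $F$ actually delivers (via Proposition~\ref{prop}). Factoriality of $T$ together with minimality of $s$ already forces $S$ to be integral: a proper component $S_0\supset C$ would be Cartier on $T$, hence $S_0=T\cap\Sigma_0$ with $\deg\Sigma_0<s$, contradicting minimality. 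With $S$ integral your liaison bookkeeping goes through, including the equality case (your product identity is valid because $W$, $C'$ and then $C$ turn out to be Cartier on the integral $S$). So your outline for part~(2) is sound, but your final paragraph mislocates the difficulty: you will not, and need not, control $\mathrm{Pic}(S)$ or $NS_1(S)$, since $\Sigma$ is special rather than general and no Noether--Lefschetz statement is available for it.

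For part~(1) there is a genuine gap. The paper takes a completely different route here (Theorem~\ref{main1}): it builds a rank-two reflexive sheaf $\fc$ on $T$ via the Hartshorne--Serre correspondence and reads the bound off from Bogomolov's inequality on a very general surface section of $T$ (whose factoriality is the second place the Noether--Lefschetz hypothesis enters, Proposition~\ref{prop}(2)). Your method instead needs an explicit hypersurface $\Sigma$ of degree $s$ through $C$ with $\Sigma\not\supset T$. When $s<k$ this is automatic, but in the boundary case $s=k$ it can fail: if $G$ is the \emph{only} degree-$k$ hypersurface through $C$ then every candidate $\Sigma$ contains $T$ and no surface $S$ exists. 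Your ``minor modification'' does not cover this, and I do not see a repair within the surface-liaison framework; the reflexive-sheaf method sidesteps the problem because it uses only the numerical vanishing $h^0(\ic_{C,T}(s-1))=0$, which holds regardless.
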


Theorem \ref{Main} turned out to be  a consequence of  much more
general results stated in Theorem \ref{main} and Theorem
\ref{main1}. They show that a sort of Speciality Theorem holds
true for Cohen-Macaulay subschemes $X\subset T$ of codimension 2
in any arithmetically Cohen-Macaulay and factorial variety $T$.
\bigskip
\section{Notations and preliminary results}
\bigskip
In order to prove Theorem \ref{Main}, in this section we gather
some known properties and results, mainly borrowed from our
previous works \cite{CCM}, \cite{Monodromy} and \cite{RCMP}.
\begin{notations}
\label{notation1}
 Let $X\subset \Pn$ be a smooth complete intersection of dimension $2i\geq2$.
Denote by $NS_{i}(X;\mathbb Z)$ be the  $i$-th N\'eron-Severi
group of $X$, i.e. the image of the cycle map:
$$NS_{i}(X;\mathbb Z):= Im(A_{i}(X)\to
H_{2i}(X;\mathbb{Z})\cong H^{2i}(X;\mathbb{Z})).$$
\end{notations}
\vskip3mm

\begin{definition}\label{NLGeneral}
 Let $X\subset \Pn$ be a smooth complete intersection of dimension $2i\geq2$.
 By the Lefschetz hyperplane section theorem we know that the homology group
$ H_{2i}(X;\mathbb{Z})\cong H^{2i}(X;\mathbb{Z}))$ is free. We
will say that $X$ is \textit{Noether-Lefschetz general} if the
rank of $NS_{i}(X;\mathbb Z)$ is one. In this case, the Lefschetz
hyperplane section theorem also implies  that $NS_{i}(X;\mathbb Z)$
is generated by the linear section class $H^i$.
\end{definition}
\par\noindent
In  \cite{CCM}, it can be found a proof for the following result:
\begin{theorem}\cite[Theorem 1.1]{CCM}
\label{CCM} Let $F$ and $G$ be smooth hypersurfaces in
$\Ps^{2m+1}$, with $deg(F)=n>k=deg (G)$, and set $X=F\cap G$. If
$F$ is Noether-Lefschetz general  then $rkNS_m(X)=1$, and $NS_m(X)$ is generated by
the linear section class.
\end{theorem}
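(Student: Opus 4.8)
The plan is to transfer the Noether--Lefschetz hypothesis from $F$ down to its hypersurface section $X=F\cap G$ by comparing algebraic cycles on the two varieties. First I would view $X$ as the zero scheme on $F$ of a section $s\in H^{0}(F,\mathcal{O}_{F}(k))$ (the restriction of the equation of $G$), with $\iota\colon X\hookrightarrow F$ the resulting codimension-one inclusion. An $m$-dimensional cycle $Z$ on $X$ is, through $\iota$, a codimension-$m$ cycle on $F$, so its pushforward $\iota_{*}[Z]\in H^{2m}(F;\mathbb{Z})$ lies in $NS_{m}(F)=\mathbb{Z}\cdot H^{m}$ because $F$ is Noether--Lefschetz general. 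Since a codimension-$(m-1)$ linear section of $X$ pushes forward to $k\,H^{m}$ on $F$, this shows that, after applying $\iota_{*}$, every element of $NS_{m}(X)$ becomes an integral multiple of the class of such a linear section. Hence the statement reduces to this: no nonzero class of $NS_{m}(X)$ lies in the kernel of the Gysin map $\iota_{*}\colon H_{2m}(X;\mathbb{Q})\to H_{2m}(F;\mathbb{Q})$. Once the rank is known to be one, the identification $NS_{m}(X)=\mathbb{Z}\cdot H^{m-1}|_{X}$ would follow from projective normality of the complete intersection.

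When $X$ is smooth this is immediate and does not even use the hypothesis on $F$: then $X$ is a smooth subvariety of $\mathbb{P}^{2m+1}$ of dimension $2m-1$, so the Lefschetz hyperplane theorem already forces $H^{2m-2}(X;\mathbb{Z})=\mathbb{Z}\cdot H^{m-1}$, and $\iota_{*}$ is injective in that degree. The substance of the theorem is the case in which $X$ is \emph{singular}, which can occur for special smooth $G$ since $F$ and $G$ may be tangent along a positive-dimensional locus; then Poincar\'e duality on $X$ fails, $H_{2m}(X;\mathbb{Q})$ may be strictly larger than $\mathbb{Q}$, and the new classes are fed by $\Sigma=\mathrm{Sing}(X)$, the closed subscheme of $F$ where $s$ vanishes together with its first-order data. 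To control this I would resolve $\pi\colon\widetilde X\to X$ and apply the decomposition theorem to $R\pi_{*}\mathbb{Q}_{\widetilde X}$, in the spirit of \cite{Monodromy} and \cite{RCMP}: the summand supported on all of $X$ recovers $IH^{*}(X)$, which obeys the intersection-cohomology Lefschetz hyperplane theorem inside $F$ and therefore equals $\mathbb{Q}\cdot H^{m-1}|_{X}$ in degree $2m-2$ regardless of the singularities, while the remaining summands live on $\Sigma$ and account for the discrepancy between $H_{*}(X)$ and $IH_{*}(X)$. One then has to show that $\ker\iota_{*}$ carries no nonzero algebraic class: such a class is supported on $\Sigma\subset F$, so it still pushes forward into $H^{2m}(F)$, and a weight/Hodge-type analysis of the $\Sigma$-summands combined with the Noether--Lefschetz generality of $F$ should force it to have been visible already in $IH^{*}(X)$. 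An alternative would be a monodromy/degeneration argument over the linear system $|\mathcal{O}_{\mathbb{P}^{2m+1}}(k)|$: an extra divisor class on the singular member $X$ would, through the local structure of the degeneration, deform to an extra Hodge class on a nearby smooth member, and the obstruction to such a deformation is governed by the variation of Hodge structure of $F$, which, being Noether--Lefschetz general, admits none.

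The hard part will be exactly this last point in the singular case — ruling out that the (possibly complicated) singularities of $X=F\cap G$, $G$ being an arbitrary smooth hypersurface of degree $<n$, carry a N\'eron--Severi class invisible on $F$. Making the decomposition-theorem bookkeeping, or the monodromy/deformation argument, precise enough that every cycle class on $X$ is traced either into $IH^{2m-2}(X)=\mathbb{Q}\cdot H^{m-1}$ or into $NS_{m}(F)=\mathbb{Z}\cdot H^{m}$, is where the work of \cite{CCM} is concentrated; the presence of $H^{m-1}|_{X}\neq 0$ then fixes the rank at exactly one, and exhibits the linear section class as a generator.
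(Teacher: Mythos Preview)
This theorem is not proved in the present paper: it is quoted as Theorem~1.1 of the authors' earlier work~\cite{CCM} and imported as a black box (note the sentence ``In \cite{CCM}, it can be found a proof for the following result'' immediately preceding it). There is therefore no proof here against which to compare your proposal.

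As a standalone assessment: your reduction to showing that $\iota_*$ is injective on $NS_m(X)\otimes\mathbb{Q}$ is the right shape, and the smooth case is indeed immediate from the Lefschetz hyperplane theorem as you say. But your treatment of the singular case is only a sketch with an openly acknowledged gap --- the phrase ``a weight/Hodge-type analysis \dots\ should force it'' is a hope, not an argument, and the decomposition-theorem bookkeeping you allude to would need substantial work to rule out algebraic classes supported on $\Sigma$. Judging from the machinery the authors assemble around this citation (Theorems~\ref{Monodromy}, \ref{RCMP} and Corollary~\ref{corollario}, all drawn from companion papers), the proof in~\cite{CCM} is almost certainly closer to your ``alternative'' monodromy paragraph than to your main decomposition-theorem route: one varies the degree-$k$ hypersurface in its linear system on $F$, uses irreducibility of the vanishing cohomology under monodromy to force any extra algebraic class into the invariant part, and then invokes Noether--Lefschetz generality of $F$ to identify the invariant part with $\mathbb{Z}\cdot H^{m}$. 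So your secondary idea is the one to develop.

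One small correction: the claim that, once the rank is one, the identification of the generator with $H^{m-1}|_X$ ``would follow from projective normality of the complete intersection'' is not right --- projective normality is about surjectivity of restriction on sections and says nothing about primitivity of $H^{m-1}$ in $H_{2m}(X;\mathbb{Z})$. In the smooth case this is again Lefschetz; in the singular case it requires the torsion-free cokernel statement of the type recorded here as Theorem~\ref{RCMP}(1).
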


\vskip4mm

\begin{notations}
\label{notation2}\begin{enumerate}
\item Let  $Q\subseteq\Pn$ be an irreducible,
reduced, non-degenerate projective variety of dimension $m+1$, with isolated singularities. Let $Q_t$ be a general hyperplane
section of $Q$. Let $U\subset \Ph ^n$ be the  open subset
parametrizing smooth hyperplane sections of $Q$. The fundamental
group $\pi_1(U)$ acts via monodromy on both
$H^{m}(Q_t;\mathbb{Z})$ and $H^{m}(Q_t;\mathbb{Q})$. We  denote by
$$H^{m}(Q_t; \mathbb{Q})=\bI \perp \bV$$  the orthogonal
decomposition given by the monodromy action on the cohomology of
$Q_t$, where $\bI$ denotes the invariant subspace.
\item
Denote by $$i^{\star}_{k}:H_{k+2}(Q;\mathbb Z)\to
H^{2n-k}(Q_t;\mathbb Z)$$ the map obtained composing the Gysin map
$H_{k+2}(Q;\mathbb Z)\to H_{k}(Q_t;\mathbb Z)$ with Poincar\'e
duality $H_{k}(Q_t;\mathbb Z)\cong H^{2n-k}(Q_t;\mathbb Z)$.
\end{enumerate}
\end{notations}
\vskip3mm
\par\noindent
In \cite{Monodromy},  it can be found a proof for the following results:

\begin{theorem}\cite[Theorem 3.1]{Monodromy}
\label{Monodromy} With notations as in \ref{notation2},
 the vector subspace $\bV    \subset H^{m}(Q_t; \mathbb{Q})$ is generated, via monodromy, by standard vanishing
cycles.
\end{theorem}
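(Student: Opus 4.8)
The plan is to reduce the statement, by means of a general Lefschetz pencil, to a local question about the monodromy around the finitely many singular fibres of the pencil. I would begin by choosing a general pencil $\{Q_{\lambda}\}_{\lambda\in\Pu}$ of hyperplane sections of $Q$, with axis meeting $Q$ transversally and disjoint from the finite set $\mathrm{Sing}(Q)$, so that the generic member is smooth. Since $Q$ has only isolated singularities, for a general such pencil the singular members are of exactly two kinds: finitely many members $Q_{\mu_{1}},\dots,Q_{\mu_{N}}$ with a single ordinary double point in the smooth locus of $Q$, and finitely many members $Q_{\lambda_{1}},\dots,Q_{\lambda_{r}}$, each passing through exactly one singular point $p_{j}$ of $Q$ and acquiring there again an isolated singularity. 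By the Zariski--Lefschetz theorem applied to the complement of the discriminant of the linear system $|\oc_{\Pn}(1)|$, a general line surjects onto $\pi_{1}(U)$; hence the image of the monodromy representation of $\pi_{1}(U)$ on $H^{m}(Q_{t};\bQ)$ is generated by the local monodromies $T_{1},\dots,T_{N}$ around the $\mu_{i}$, together with the ``special'' monodromies $S_{1},\dots,S_{r}$ around the $\lambda_{j}$.

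By the Picard--Lefschetz formula each $T_{i}$ is the transvection $x\mapsto x\pm(x\cdot\delta_{i})\,\delta_{i}$ in a standard vanishing cycle $\delta_{i}$, and since $Q$ is irreducible and non-degenerate the dual variety $Q^{\vee}$ is irreducible, with generic point a simple tangency, so the $\delta_{i}$ form a single $\pi_{1}(U)$-orbit up to sign. Put $E:=\langle\delta_{1},\dots,\delta_{N}\rangle$. An invariant class $x$ satisfies $T_{i}x=x$, i.e. $(x\cdot\delta_{i})\,\delta_{i}=0$, so $\bI\subseteq E^{\perp}$; as the intersection form on $H^{m}(Q_{t};\bQ)$ is non-degenerate (Poincar\'e duality), this means $E\subseteq\bV$, and since $\bV$ is monodromy-invariant it contains the whole $\pi_{1}(U)$-span of the $\delta_{i}$. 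Hence it suffices to prove the reverse inclusion $\bV\subseteq E$, equivalently $\bI\supseteq E^{\perp}$: then $\bV=E$, $E$ is the $\pi_{1}(U)$-span of the $\delta_{i}$, and Theorem \ref{Monodromy} follows. Since the $T_{i}$ fix $E^{\perp}$ pointwise and each $S_{j}$ is an isometry, $\bI\supseteq E^{\perp}$ will follow, by a short formal computation, once one establishes
\[
(S_{j}-\mathrm{id})\bigl(H^{m}(Q_{t};\bQ)\bigr)\subseteq E\qquad\text{for every }j .
\]

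This inclusion is the heart of the argument, and it records the local contribution of the singularity $p_{j}$. Near $p_{j}$ the pencil degenerates only inside a small ball $B$: outside $B$ the family is topologically trivial, while $Q_{\lambda}\cap B$ is a one-parameter smoothing of the isolated singularity $(Q_{\lambda_{j}}\cap B,p_{j})$, whose Milnor fibre $M_{j}$ has reduced cohomology concentrated in the top degree $m$ (Hamm's theorem, in the complete intersection situation of Theorem \ref{Main}). The Wang (variation) sequence of this local fibration confines $(S_{j}-\mathrm{id})(H^{m}(Q_{t};\bQ))$ to the image $V_{j}\subseteq H^{m}(Q_{t};\bQ)$ of the local vanishing cohomology at $p_{j}$. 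To obtain $V_{j}\subseteq E$ I would then Morse-ify within the linear system: a small generic perturbation of the pencil near $\lambda_{j}$ replaces the single crossing of the hyperplane $\check p_{j}$ (the hyperplanes through $p_{j}$) by a cluster of crossings of the local branches of $Q^{\vee}$, arising from hyperplanes simply tangent to $Q$ at smooth points near $p_{j}$ — the number of these branches being governed by the local Milnor-type invariants of the singularity, via Teissier's formula for the class of a variety with isolated singularities. This exhibits $S_{j}$ as conjugate in $\pi_{1}(U)$ to a product $T_{\gamma_{j,1}}\cdots T_{\gamma_{j,\ell_{j}}}$ of Picard--Lefschetz transvections in ordinary vanishing cycles $\gamma_{j,k}$, whence $(S_{j}-\mathrm{id})(H^{m}(Q_{t};\bQ))\subseteq\langle\gamma_{j,1},\dots,\gamma_{j,\ell_{j}}\rangle$; and since $Q^{\vee}$ is irreducible, each $\gamma_{j,k}$ is $\pi_{1}(U)$-conjugate to a standard vanishing cycle, hence lies in $E$. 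Putting this together with the previous reduction yields $S_{j}|_{E^{\perp}}=\mathrm{id}$ for every $j$, so $\bI=E^{\perp}$ and $\bV=E$. (When $Q$ is smoothable — in particular when $Q=T=F\cap G$ is a complete intersection, hence a flat limit of smooth complete intersections $Q_{\varepsilon}$ — one may instead obtain this factorisation of $S_{j}$ by letting $\varepsilon\to0$ and collecting the cluster of ordinary critical values of a general Lefschetz pencil on $Q_{\varepsilon}$ that collides at $\lambda_{j}$.)

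The step I expect to be the real obstacle is precisely this control of the special monodromy $S_{j}$ at a fibre that acquires a genuine singularity of $Q$: ruling out that its variation produces classes outside $E$ requires a careful local analysis of the discriminant of $|\oc_{\Pn}(1)|$ near the generic point of $\check p_{j}$ — how many branches of $Q^{\vee}$ approach it and with which multiplicities — together with the comparison between the fundamental groups of the relevant complements in the singular model and in a smoothed one. That $\mathrm{Sing}(Q)$ is finite is what keeps this tractable: it confines the ``extra'' part of the discriminant to the finitely many hyperplanes $\check p_{j}$ and makes the collision of critical values both finite and governed by the local invariants of the singularities.
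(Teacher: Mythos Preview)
The paper does not prove this statement: Theorem~\ref{Monodromy} is quoted verbatim from \cite[Theorem~3.1]{Monodromy} and used as a black box in the proof of Proposition~\ref{prop}. There is therefore nothing in the present paper to compare your argument against; what you have written is an attempted reconstruction of the proof in the cited Annales~ENS paper, not of anything appearing here.

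That said, your outline has the right architecture --- reduce via Zariski to a pencil, separate the ordinary nodal degenerations from the ``special'' ones at the $\check p_{j}$, and show that each special monodromy $S_{j}$ has variation contained in the span of ordinary vanishing cycles --- but the key step is not correct as you have written it. You propose to ``Morse-ify within the linear system'' by perturbing the pencil near $\lambda_{j}$ so that the single crossing of $\check p_{j}$ is replaced by a cluster of crossings of branches of $Q^{\vee}$. This cannot happen: $\check p_{j}$ is a \emph{hyperplane} in $\check{\Ps}^{n}$, so every line meets it exactly once, and for a general point of $\check p_{j}$ there are no branches of $Q^{\vee}$ limiting onto it (already when $p_{j}$ is an ordinary node of $Q$, the discriminant near a general point of $\check p_{j}$ is locally just $\check p_{j}$ itself). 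So no perturbation of the pencil on the fixed singular $Q$ produces the factorisation of $S_{j}$ into ordinary transvections that you need. Your parenthetical alternative --- smooth $Q$ to a nearby $Q_{\varepsilon}$ and let the ordinary critical values of a pencil on $Q_{\varepsilon}$ collide --- is the viable route, and is essentially what \cite{Monodromy} does, but it is not a throwaway remark: it requires identifying $H^{m}(Q_{t};\bQ)$ with $H^{m}((Q_{\varepsilon})_{t};\bQ)$ compatibly with the monodromy and tracking what happens to the invariant/vanishing decomposition under specialisation. A smaller point: you define $E=\langle\delta_{1},\dots,\delta_{N}\rangle$ from a single pencil and then use ``$\gamma_{j,k}$ is $\pi_{1}(U)$-conjugate to a $\delta_{i}$, hence lies in $E$'', which presupposes that $E$ is $\pi_{1}(U)$-stable --- exactly the conclusion you are after. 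Working instead with the $\pi_{1}(U)$-span of one vanishing cycle (which is what the theorem actually asserts equals $\bV$) removes this circularity.
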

\vskip3mm

\begin{corollary}\cite[Corollary 3.7]{Monodromy}
\label{corollario}  The vector subspace $\bV\subset H^{m}(Q_t; \mathbb{Q})$ is irreducible via monodromy action.
\end{corollary}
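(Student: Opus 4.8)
The plan is to carry out the classical Picard--Lefschetz argument for the irreducibility of vanishing cohomology. Let $\delta_1,\dots,\delta_N\in H^m(Q_t;\mathbb{Q})$ be the standard vanishing cycles occurring in Theorem \ref{Monodromy}, and let $T_1,\dots,T_N$ be the associated Picard--Lefschetz monodromy transformations; thus each $T_j$ lies in the image of $\pi_1(U)$ and satisfies $T_j(x)=x\pm\langle x,\delta_j\rangle\,\delta_j$ for every $x\in H^m(Q_t;\mathbb{Q})$, where $\langle\ ,\ \rangle$ denotes the cup-product pairing. By Theorem \ref{Monodromy}, the monodromy translates $g\cdot\delta_j$ (with $g\in\pi_1(U)$, $1\le j\le N$) span $\bV$.

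First I record that $\langle\ ,\ \rangle$ restricts to a non-degenerate form on $\bV$. On all of $H^m(Q_t;\mathbb{Q})$ it is non-degenerate, by Poincar\'e duality on the smooth variety $Q_t$, and it is $\pi_1(U)$-invariant. By construction of the decomposition in \ref{notation2} one has $\bV=\bI^{\perp}$ and $H^m(Q_t;\mathbb{Q})=\bI\oplus\bV$; since $\bI\subseteq\bV^{\perp}$ and $\dim\bV^{\perp}=\dim H^m(Q_t;\mathbb{Q})-\dim\bV=\dim\bI$, we get $\bV^{\perp}=\bI$, so that the radical of the restricted form is $\bV\cap\bV^{\perp}=\bV\cap\bI=0$.

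Now let $W\subseteq\bV$ be a non-zero $\pi_1(U)$-submodule; I claim $W=\bV$. If we had $\langle w,\delta_j\rangle=0$ for every $w\in W$ and every $j$, then, $W$ being $\pi_1(U)$-stable and the pairing $\pi_1(U)$-invariant, we would also have $\langle w,g\cdot\delta_j\rangle=\langle g^{-1}\cdot w,\delta_j\rangle=0$ for every $g\in\pi_1(U)$; since the $g\cdot\delta_j$ span $\bV$, this would give $W\subseteq\bV^{\perp}\cap\bV=0$, a contradiction. Hence there exist $w\in W$ and an index $j$ with $\langle w,\delta_j\rangle\neq0$, and then $W$ contains $T_j(w)-w=\pm\langle w,\delta_j\rangle\,\delta_j$, hence contains $\delta_j$. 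It now suffices to know that the monodromy group acts transitively, up to sign, on $\{\delta_1,\dots,\delta_N\}$: granting this, $W$ contains $\pm g\cdot\delta_i$ for all $g\in\pi_1(U)$ and all $i$, hence a spanning set of $\bV$, and therefore $W=\bV$.

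The main obstacle is exactly this last transitivity statement. In the classical situation of a Lefschetz pencil on a smooth variety it follows from the irreducibility of the dual variety together with the connectedness of the complement of the critical values along the pencil. Here $Q$ is only assumed to have isolated singularities, so the pencil acquires additional special members --- those cut out by hyperplanes through $\mathrm{Sing}(Q)$ --- and one has to verify that these do not destroy the transitivity among the genuine (nodal) vanishing cycles appearing in Theorem \ref{Monodromy}. This is precisely the type of analysis carried out in \cite{Monodromy} (building on \cite{CCM}); once it is available, the computation above shows that $\bV$ admits no proper non-zero $\pi_1(U)$-submodule, i.e. it is irreducible under the monodromy action.
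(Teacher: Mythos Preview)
The paper does not prove this corollary; it is quoted from \cite{Monodromy} (Corollary~3.7 there) and invoked later in the proof of Proposition~\ref{prop}. So there is no proof in the present paper to compare your argument against.

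That said, your argument is the standard Picard--Lefschetz deduction of irreducibility, and the formal steps are correct: the restriction of the cup-product to $\bV$ is indeed non-degenerate for the reason you give, and the Picard--Lefschetz formula does force any non-zero $\pi_1(U)$-submodule $W\subseteq\bV$ to contain some $\delta_j$. You also locate correctly the one non-formal ingredient, namely the conjugacy (up to sign) of the standard vanishing cycles under $\pi_1(U)$. In the smooth case this follows from the irreducibility of the discriminant locus; for $Q$ with isolated singularities it is exactly the point handled in \cite{Monodromy}. Since you explicitly defer this step to \cite{Monodromy}, your proof is correct but not self-contained: in effect you have reconstructed how the corollary is obtained there from the conjugacy of vanishing cycles.
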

\vskip3mm

The results  \ref{Monodromy} and \ref{corollario} concern rational cohomology. In the paper
\cite{RCMP} they are used to prove similar results concerning integral cohomology:
\begin{theorem}\cite[Theorem 2.1]{RCMP}
\label{RCMP} With notations as in \ref{notation2},
 the following properties hold true.
\begin{enumerate}
\item For any  integer $m< k\leq 2m$ the map $i^{\star}_{k}$ is an
isomorphism, the map
 $i^{\star}_{m}$ is injective  with torsion-free cokernel, and $H_{m+2}(Q;\mathbb
Z)\cong \bI$ via  $i^{\star}_{m}$.
\item For any  even integer $m< k=2i\leq 2m$ the map
$i^{\star}_{k}\otimes_{ \mathbb{Z}} \mathbb{Q}$ induces an
isomorphism $$NS_{i+1}(Q;\mathbb Q)\cong NS_{i}(Q_t;\mathbb Q).$$
\item If $k=2i=m$ and the orthogonal complement $\bV$ of
$\bI\otimes_{\mathbb Z}\mathbb Q$ in $H^{n}(Q_t;\mathbb Q)$ is not
of pure Hodge type $(m/2,m/2)$, then $NS_{i}(Q_t;\mathbb
Z)\subseteq \bI$, and the map $i^{\star}_{n}\otimes_{ \mathbb{Z}}
\mathbb{Q}$ induces an isomorphism $NS_{i+1}(Q;\mathbb Q)\cong
NS_{i}(Q_t;\mathbb Q)$.
\end{enumerate}
\end{theorem}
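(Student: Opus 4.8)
The strategy is to prove assertion (1) first, as a Lefschetz theorem refined to integral coefficients, and then to extract (2) and (3) from it by Hodge theory. Fix a general hyperplane section $Q_t$; by Bertini it is smooth and disjoint from the finitely many singular points of $Q$, so that $Q\setminus Q_t$ is a smooth affine variety of dimension $m+1$ and $Q_t$ is a compact complex manifold of dimension $m$ carrying Poincar\'e duality. Recall that $i^{\star}_{k}$ is the composite of the homological Gysin map $H_{k+2}(Q;\bZ)\to H_{k}(Q_t;\bZ)$ with Poincar\'e duality on $Q_t$; under this duality the Gysin map corresponds to the restriction $H^{\bullet}(Q)\to H^{\bullet}(Q_t)$ on the smooth locus, which is why the Lefschetz range controls it.

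First I would dispose of the degrees $m<k\le 2m$. Because $Q\setminus Q_t$ is affine of dimension $m+1$, the Lefschetz hyperplane section theorem (in the form valid for the inclusion $Q_t\hookrightarrow Q$ with $Q$ having isolated singularities) makes the Gysin map $H_{k+2}(Q;\bZ)\to H_{k}(Q_t;\bZ)$ an isomorphism for $k+2>m+2$, i.e. strictly away from the middle; composing with Poincar\'e duality gives the first isomorphism of (1). The genuine content is concentrated in the middle degree $k=m$.

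For $k=m$ I would proceed in two stages. Over $\bQ$, a class of $H_{m+2}(Q;\bQ)$ pushed to $Q_t$ is automatically $\pi_1(U)$-invariant, so $\mathrm{Im}(i^{\star}_{m}\otimes\bQ)\subseteq\bI$; the reverse inclusion, together with injectivity, is exactly the global invariant cycle theorem, which identifies the monodromy invariants $\bI$ with the image of the restriction from the total space. This gives $H_{m+2}(Q;\bQ)\cong\bI$. To upgrade this to $\bZ$ and to obtain torsion-freeness of the cokernel, note that the integral invariants $H^{m}(Q_t;\bZ)^{\pi_1(U)}$ always form a saturated sublattice, so it suffices to prove that $\mathrm{Im}(i^{\star}_{m})$ equals the full integral invariant lattice, and not merely a finite-index subgroup. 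This is where Theorem \ref{Monodromy} and Corollary \ref{corollario} are indispensable: the orthogonal complement $\bV$ is generated by standard vanishing cycles and is irreducible under monodromy, and the Picard-Lefschetz formulas describe the cokernel of the restriction in terms of these vanishing classes. Any torsion in the cokernel would force a vanishing cycle to be a nontrivial divisible element of $H^{m}(Q_t;\bZ)$, in conflict with the primitivity imposed by the irreducibility of $\bV$; this rules out torsion and pins down the image as the integral invariant lattice, yielding $H_{m+2}(Q;\bZ)\cong\bI$. I expect this middle-degree torsion analysis to be the main obstacle, everything else being formal Lefschetz theory.

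Finally, (2) and (3) follow by tensoring (1) with $\bQ$ and restricting to algebraic classes. Since the Gysin map is intersection with the algebraic divisor $Q_t$, the map $i^{\star}_{k}$ is a morphism of Hodge structures compatible with the cycle-class maps, so for the even degrees $m<k=2i\le 2m$ of (2) the isomorphism of (1) restricts to the isomorphism $NS_{i+1}(Q;\bQ)\cong NS_{i}(Q_t;\bQ)$, the surjectivity being guaranteed because in these degrees $H^{\bullet}(Q_t)$ carries no monodromy, so every class descends from $Q$. For (3), where $k=2i=m$, one has additionally to check $NS_{i}(Q_t;\bZ)\subseteq\bI$. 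Decomposing a Hodge class $\gamma\in NS_{i}(Q_t;\bQ)$ as $\gamma_{\bI}+\gamma_{\bV}$ along the orthogonal splitting, which is legitimate since $\bI$ and $\bV$ are sub-Hodge structures, the component $\gamma_{\bV}$ is a Hodge class of type $(i,i)$ contained in $\bV$. By Corollary \ref{corollario} the space $\bV$ is irreducible under monodromy, and since by hypothesis it is not purely of type $(m/2,m/2)$, a Noether-Lefschetz argument for the general member $Q_t$ forces $\gamma_{\bV}=0$; hence $NS_{i}(Q_t;\bZ)\subseteq\bI$ and $i^{\star}_{m}\otimes\bQ$ induces the asserted isomorphism.
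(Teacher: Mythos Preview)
The paper does not prove this theorem at all: it is quoted verbatim as \cite[Theorem 2.1]{RCMP} and used as a black box in the proof of Proposition~\ref{prop}. There is therefore no ``paper's own proof'' to compare your attempt against; the argument lives in the cited reference \cite{RCMP}, not here.

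That said, your sketch is in the right spirit for how such a result is established. The Lefschetz range $m<k\le 2m$ is indeed handled by the affine Lefschetz theorem for $Q\setminus Q_t$, and the identification $\bI\cong H_{m+2}(Q;\bQ)$ is the global invariant cycle theorem. Your treatment of the torsion-freeness of the cokernel in the middle degree is the weakest link: the assertion that ``any torsion in the cokernel would force a vanishing cycle to be a nontrivial divisible element'' is not quite an argument, and irreducibility of $\bV$ under monodromy is a statement about $\bQ$-representations, so it does not by itself control divisibility in $\bZ$. In the actual proof one has to work more carefully with the Leray spectral sequence or with the long exact sequence of the pair $(Q,Q_t)$ and the Thom isomorphism to pin down the integral image and its saturation. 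Your derivation of (2) and (3) from (1) via compatibility with Hodge structures and the Noether--Lefschetz mechanism is correct in outline.
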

\vskip3mm
\par\noindent
One of the main ingredients of the proof of Theorem \ref{Main} is the following Proposition.

\begin{proposition}\label{prop} Let $F\subset \Pcq$ be a Noether-Lefschetz general hypersurface and let $G\subset\Pcq $ be
a smooth hypersrface with $k:=degG<d:=degF$.
Define $T:=F\cap G$. Then we have:
\begin{enumerate}
\item the threefold $T$ is factorial with isolated singularities;
\item if $deg\,T\geq 4$ then the general hyperplane section $S:=H\cap T$ is a Noether-Lefschetz general surface.
\end{enumerate}
\end{proposition}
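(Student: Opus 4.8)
The plan is to establish the two assertions in turn, the main inputs being Theorem~\ref{CCM}, Theorem~\ref{RCMP} together with the monodromy results~\ref{Monodromy}--\ref{corollario}, and the Lefschetz hyperplane theorems (valid for complete intersections even when they are singular). First I record the structural facts. Since $F$ and $G$ are hypersurfaces, $T=F\cap G$ is cut out by two equations, hence is a local complete intersection threefold, in particular Cohen-Macaulay; moreover the section of $\mathcal{O}_F(\deg G)$ defining $T$ inside $F$ is irreducible, because $G$ is smooth of degree $<\deg F$ and $H^{0}(\mathcal{O}_{\mathbb{P}^5}(j))\to H^{0}(\mathcal{O}_F(j))$ is surjective, so $T$ is integral and non-degenerate in its linear span. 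Its singular locus is the non-transversality locus $\{p\in F\cap G:\ T_pF=T_pG\}$, which is finite by a dimension count in the incidence variety of common tangent hyperplanes, using the generality of $F$; thus $T$ has at worst isolated singularities. Being Cohen-Macaulay and regular in codimension one, $T$ is normal, so $\operatorname{Pic}(T)\hookrightarrow\operatorname{Cl}(T)\cong\operatorname{Pic}(T\setminus\operatorname{Sing}T)$.

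For the factoriality in~(1): the Koszul resolution of the complete intersection $T$ gives $H^{1}(T,\mathcal{O}_T)=0$, and the Lefschetz hyperplane theorem gives $H^{2}(T;\mathbb{Z})=\mathbb{Z}$, generated by the hyperplane class; so the exponential sequence yields $\operatorname{Pic}(T)=\mathbb{Z}\cdot\mathcal{O}_T(1)$. Since $F$ is Noether-Lefschetz general and $\deg F>\deg G$, Theorem~\ref{CCM} (with $m=2$) gives that $NS_{2}(T;\mathbb{Z})$ has rank one and is generated by the linear section class, that is, by the image of $\mathcal{O}_T(1)$; in particular $\operatorname{Pic}(T)\to NS_{2}(T;\mathbb{Z})$ is onto. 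On the other hand $\operatorname{Cl}(T)$ is torsion-free of rank one: this is the integral refinement of Theorem~\ref{CCM} provided by~\cite{RCMP} (alternatively, $\operatorname{Cl}(T)\hookrightarrow H^{2}(T\setminus\operatorname{Sing}T;\mathbb{Z})$, which is torsion-free because $H^{2}(T;\mathbb{Z})$ is and the links of the isolated singularities of $T$ are simply connected, and is of rank one because $H^{1}(T,\mathcal{O}_T)=0$ and $\operatorname{rk}NS_{2}(T;\mathbb{Z})=1$). Then the tautological surjection $\operatorname{Cl}(T)\to NS_{2}(T;\mathbb{Z})$ is an isomorphism of copies of $\mathbb{Z}$; combining this with the surjectivity of $\operatorname{Pic}(T)\to NS_{2}(T;\mathbb{Z})$ and the injectivity of $\operatorname{Pic}(T)\to\operatorname{Cl}(T)$ forces $\operatorname{Pic}(T)=\operatorname{Cl}(T)$, i.e. $T$ is factorial.

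For~(2), assume $\deg T\ge 4$. As $T$ has isolated singularities, a general hyperplane $H$ avoids $\operatorname{Sing}T$, so $S=H\cap T$ is a smooth complete intersection surface, and by Definition~\ref{NLGeneral} it is enough to show $\operatorname{rk}NS_{1}(S;\mathbb{Z})=1$. I would apply part~(3) of Theorem~\ref{RCMP} with $Q=T$ (integral, non-degenerate in its span, with isolated singularities by~(1)), so that in the notation there $m=2$, $i=1$, $k=m=2$. Its hypothesis is that the orthogonal complement $\mathbb{V}$ of the invariant part in $H^{2}(S;\mathbb{Q})$ is not of pure Hodge type $(1,1)$; since $H^{2}(T;\mathbb{Q})=\mathbb{Q}$, the invariant subspace $\mathbb{I}$ is the line spanned by the hyperplane class of $S$ (global invariant cycle theorem, the singular total space being handled by the results~\ref{Monodromy}--\ref{corollario}), which is of type $(1,1)$; hence $\mathbb{V}$ fails to be of pure type $(1,1)$ exactly when $p_g(S)>0$. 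An adjunction computation ($K_S=\mathcal{O}_S(\deg F+\deg G-5)$ if $\deg G\ge 2$, and $K_S=\mathcal{O}_S(\deg F-4)$ if $\deg G=1$) shows $p_g(S)>0\iff\deg T\ge 4$, which is precisely our hypothesis. Then part~(3) of Theorem~\ref{RCMP} gives $NS_{1}(S;\mathbb{Q})\cong NS_{2}(T;\mathbb{Q})$, and the right-hand side has rank one by Theorem~\ref{CCM}; hence $\operatorname{rk}NS_{1}(S;\mathbb{Z})=1$ and $S$ is Noether-Lefschetz general.

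The step I expect to be the real obstacle is the passage from rational to integral information. For a threefold, factoriality is strictly stronger than $\mathbb{Q}$-factoriality, so knowing $\operatorname{rk}NS_{2}(T;\mathbb{Z})=1$ does not suffice: one must control torsion and identify $\operatorname{Cl}(T)$ exactly, and this is where the monodromy analysis behind~\cite{RCMP} is essential rather than formal. The companion difficulty is to verify, for the singular threefold $T$, that the invariant subspace $\mathbb{I}\subset H^{2}(S;\mathbb{Q})$ really reduces to the hyperplane-class line, so that the Hodge-type hypothesis in part~(3) of Theorem~\ref{RCMP} collapses to the numerical condition $\deg T\ge 4$; this is what the results~\ref{Monodromy}--\ref{corollario} are built to guarantee. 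A more elementary point, but one that should be argued rather than assumed, is the finiteness of $\operatorname{Sing}T$, which must be derived from the generality of $F$.
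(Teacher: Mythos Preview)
Your overall strategy matches the paper's: both parts are deduced from Theorem~\ref{CCM} together with the monodromy package~\ref{Monodromy}--\ref{corollario}--\ref{RCMP}. For~(2) your argument is essentially a compressed version of the paper's: you apply part~(3) of Theorem~\ref{RCMP} directly, verifying the Hodge-type hypothesis via $p_g(S)>0$, whereas the paper spells out the same step using part~(1) of Theorem~\ref{RCMP} to identify $\mathbb I$ with the image of $H_4(T;\mathbb Z)$ and then the irreducibility of~$\mathbb V$ (Corollary~\ref{corollario}) to force $NS_1(S;\mathbb Z)\subset\mathbb I$.

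There is however a genuine gap in~(1), precisely at the point you flag at the end as ``more elementary'': the finiteness of $\operatorname{Sing}T$. Your incidence-variety dimension count requires $F$ to be \emph{Zariski}-general among hypersurfaces of its degree, but the hypothesis is only that $F$ is \emph{Noether-Lefschetz} general, i.e.\ $\operatorname{rk}NS_2(F;\mathbb Z)=1$. These are different conditions---Noether-Lefschetz generality says nothing about the position of $F$ relative to a fixed smooth $G$---so a parameter count over the family of $F$'s is not available here. The paper does not use any genericity of $F$ for this step: it invokes a general intersection-theoretic fact, \cite[Proposition~4.2.6]{Vogel}, valid for \emph{any} two smooth hypersurfaces in $\mathbb P^5$. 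You should replace your argument by this (or an equivalent) reference.

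A minor remark on the factoriality step: your argument via $\operatorname{Pic}(T)\hookrightarrow\operatorname{Cl}(T)$ and torsion-freeness is correct but more elaborate than what the paper does. The paper simply writes ``factorial by Theorem~\ref{CCM}''; the point is that the result of~\cite{CCM}, as proved there rather than merely as quoted here, is already a factoriality statement: the cycle map $\operatorname{Cl}(T)=A_2(T)\to H_4(T;\mathbb Z)$ is shown to be injective with image $NS_2(T;\mathbb Z)$, so ``$NS_2(T;\mathbb Z)$ generated by the linear section class'' directly gives $\operatorname{Cl}(T)=\mathbb Z\cdot H$. Your longer route recovers the same conclusion.
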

\begin{proof}
(1) The threefold $T$ has at worst finitely many singularities by \cite[Proposition 4.2.6]{Vogel}. Furthermore, $T$ is
 factorial by
Theorem \ref{CCM}.
\par\noindent(2) Combining Theorem \ref{Monodromy}, Corollary \ref{corollario} and Theorem \ref{RCMP}, the proof runs similarly as the  classical one
(compare with the proof of \cite[Theorem 3.2]{IJM}). Indeed,
denote by $U\subset \Ph ^5$ the affine open subset parametrizing
smooth hyperplane sections of $T$. The fundamental group
$\pi_1(U)$ acts via monodromy on both $H^{2}(S;\mathbb{Z})$ and
$H^{2}(S;\mathbb{Q})$.  As in \ref{notation2}, consider the
orthogonal decomposition $H^{2}(S;\mathbb{Q})=\bI \perp \bV$,
where $\bI$ is the $\pi_1(U)$-invariant cohomology (compare also
with  \cite[Notations 3.1 (ii)]{IJM}). By Theorem \ref{Monodromy}
and Corollary \ref{corollario} we know that  the vanishing
cohomology $\bV$ is a $\pi_1(U)$-irreducible module generated by
the standard vanishing cycles. On the other hand,  Theorem
\ref{RCMP} implies that the $\pi_1(U)$-invariant part of
$H^{2}(S;\mathbb{Z})\simeq H_{2}(S;\mathbb{Z})$ is the image of
the  Gysin map:
\begin{equation}
\label{Gysin}
\bI \cap H_{2}(S;\mathbb{Z})= Im( H_{4}(T;\mathbb{Z})\stackrel{\cap \,u}\longrightarrow H_{2}(S;\mathbb{Z}))
\end{equation}
(here $u\in H^2(T, T-S;\mathbb{Z})$ denotes the orientation class
\cite[\S 19.2]{FultonIT}). By point (1) $T$ is factorial, hence
the subspace $\bI$ is generated by the hyperplane class. But then
$\bV$ is not of pure Hodge type because $deg\,T\geq 4$. By
irreducibility, the image of $NS_{1}(S;\mathbb Z)$ in $\bV$
vanishes. This implies that the N\'eron-Severi group
$NS_{1}(S;\mathbb Z)$ is $\pi_1(U)$-invariant and  (\ref{Gysin})
says that $S$ is Noether-Lefschetz general.
\end{proof}
\bigskip
\section{Proof of Theorem \ref{Main} (2)}
\bigskip
\begin{definition}
Let $X$ be a Cohen-Macaulay projective scheme. We
define the \textit{speciality index} $e_X$ of $X$
as the maximal integer
$t$ such that $h^0(X,\omega_X(-t))>0$, where $\omega_X$ denotes
the dualizing sheaf of $X$.
\end{definition}

The proof of Theorem \ref{Main} (2) rests on the following much more general result:

\begin{theorem}[Speciality theorem for aCM varieties]\label{main}
Let $T\subset \Pn$ be an arithmetically Cohen-Macaulay (aCM for
short), factorial and subcanonical variety with $dimT=m\geq 3$ and
$\omega_T\simeq \oc_T(t)$.\par\noindent Let $G\subset T$ be an
integral  divisor. Since $T$ is factorial and aCM, we have $G= \tg
\cap T$ with $\tg\subset \Pn$ a projective hypersurface of some
degree $g$. Let $X\subset G$ be a  Cohen-Macaulay scheme of
codimension two in $T$.\par\noindent Then $$e_X\leq
\frac{deg(X)}{deg(T)g} +g+t$$ and the equality holds iff $X$ is a
complete intersection $X=T\cap \tilde{G}\cap H$, with
$deg(H)=\frac{deg(X)}{deg(T)g}$.
\end{theorem}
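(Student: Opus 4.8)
The plan is to reduce Theorem~\ref{main} to the classical Gruson--Peskine-type speciality estimate by cutting down with hyperplanes and by exploiting the factoriality of $T$. First I would set $N:=\deg(T)$, let $\tg\subset\Pn$ be a hypersurface of degree $g$ with $G=\tg\cap T$, and observe that since $T$ is aCM and subcanonical with $\omega_T\simeq\oc_T(t)$, liaison on $T$ is well behaved: the divisor $G$ is itself aCM (being a hypersurface section of an aCM variety), and $\omega_G\simeq\oc_G(t+g)$ by adjunction on $T$. The key reduction step is to pass from the codimension-two Cohen--Macaulay subscheme $X\subset G$ to a zero-dimensional scheme by taking a general linear section: choosing $m-1$ general hyperplanes $H_1,\dots,H_{m-1}$, the scheme $X':=X\cap H_1\cap\dots\cap H_{m-1}$ is a finite Cohen--Macaulay (hence aCM) subscheme of the aCM surface $S':=T\cap H_1\cap\dots\cap H_{m-1}$, of degree $\deg(X)$, contained in the curve $G':=G\cap H_1\cap\dots\cap H_{m-1}$ of degree $Ng$ on $S'$. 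Because Serre duality and the general hyperplane sections interact as $e_{X'}=e_X$ (the speciality index is preserved under general linear section for Cohen--Macaulay schemes, since $h^0(\omega_X(-t))$ computes a cohomology that restricts well), it suffices to prove the bound for points on a surface.

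Next, on the aCM surface $S'$ I would use liaison. Since $S'$ is aCM and factorial in a suitable sense (factoriality of $T$ descends to the general surface section by a Grothendieck--Lefschetz argument, cf.\ Proposition~\ref{prop}), and since $\omega_{S'}\simeq\oc_{S'}(t+m-1)$, one has a clean theory of linkage of curves on $S'$. The natural move is to link $X'$ (viewed as a degree-$\deg(X)$ divisor, or rather as a subscheme of the curve $G'$) inside the complete intersection $G'\cap H$ for a general hyperplane $H$: writing the residual scheme $X''$, liaison gives $\deg(X')+\deg(X'')=Ng\cdot\deg(H)$ and a relation between the speciality indices, $e_{X'}+\text{(a shift)}=e_{X''}+\text{(a shift)}$, controlled by $t$, $g$, and $N$. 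One then iterates, or more cleanly invokes the aCM analogue of the Gruson--Peskine speciality theorem on the arithmetically Gorenstein surface $S'$: the maximal $t$ with $h^0(\omega_{X'}(-t))>0$ is bounded by $\frac{\deg(X')}{\text{(degree of minimal generating divisor of }X'\text{ in }S')}$ plus a correction, and the minimal degree is $Ng$ because $G$ is, by hypothesis on $X\subset G$ and factoriality, the minimal-degree divisor through $X$ on $T$.

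The heart of the argument — and the step I expect to be the main obstacle — is establishing the equality case. One must show that if $e_X=\frac{\deg(X)}{Ng}+g+t$ then $X$ is forced to be the complete intersection $T\cap\tg\cap H$ with $\deg(H)=\frac{\deg(X)}{Ng}$. The strategy here is: equality in the speciality bound forces equality in every intermediate liaison step, which forces the residual scheme at each stage to be empty or itself extremal; running the liaison backwards, $X'$ must be a complete intersection of $G'$ with a hypersurface of degree exactly $\frac{\deg(X)}{Ng}$ on $S'$. Then one lifts this back up: a subscheme of $T$ whose general surface section is a complete intersection $G'\cap H'$ with $H'$ of the predicted degree must itself be $T\cap\tg\cap H$, using factoriality of $T$ to guarantee that the ``extra'' divisor cutting $X$ out of $G$ is actually the restriction of a hypersurface $H\subset\Pn$ of the right degree (this is where $NS$ being generated by the hyperplane class, encoded in factoriality and subcanonicity, is essential — it prevents a pathological divisor of the same degree that is not a hyperplane section). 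Making the lifting rigorous — i.e.\ going from ``the general linear section is extremal'' to ``$X$ itself is a complete intersection in $T$'' — is the delicate point, and I would handle it by a semicontinuity/flatness argument showing that the ideal sheaf $\ic_{X/T}$ has the Hilbert function of a complete intersection, hence (by the factoriality and aCM hypotheses, which make such Hilbert functions rigid) $X$ is a complete intersection.
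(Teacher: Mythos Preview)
Your approach takes a substantial detour that the paper avoids, and it has real gaps. The paper proves the bound by a direct liaison argument \emph{inside $T$}: choose a general hypersurface $P$ of degree $p\gg 0$ through $X$, link $X$ to its residual $R$ in the complete intersection $Y=G\cap(T\cap P)$, and use the Noether linkage sequence $0\to\ic_Y\to\ic_R\to\omega_X(-t-g-p)\to 0$ together with $h^0(\omega_X(-e))\neq 0$ and the aCM vanishing $h^1(\ic_Y(t+g+p-e))=0$ to produce a hypersurface $S$ of degree $s=t+g+p-e$ containing $R$ but not $Y$. Since $G$ is integral, $R$ is linked inside $G\cap S$ to some $R'$, and $\deg R'=\deg X-\deg(T)\,g\,(e-t-g)\geq 0$ gives the inequality. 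Equality forces $R'=\emptyset$, so $R=G\cap S$ is itself a complete intersection, and a second application of the linkage sequence produces a hypersurface $H$ of degree $p-s=\deg(X)/(\deg(T)g)$ with $X=G\cap H$. No cutting down to surfaces, no lifting, and no factoriality of sections is used.

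Your reduction strategy, by contrast, has several concrete problems. First, the numerics are off: $\dim X=m-2$, so intersecting with $m-1$ general hyperplanes yields the empty scheme, not a finite one; you need $m-2$ hyperplanes, and then $S'$ is a surface. Second, the claim $e_{X'}=e_X$ is false as stated: adjunction gives $\omega_{X\cap H}\simeq\omega_X(1)|_{X\cap H}$, so the speciality index shifts by one at each hyperplane cut, and you would have to track this against the simultaneous shift in the subcanonical index of $T$. Third, and most seriously, you invoke Proposition~\ref{prop} to descend factoriality of $T$ to its general surface section, but that proposition is specific to $T=F\cap G\subset\Pcq$ with $F$ Noether--Lefschetz general; Theorem~\ref{main} is stated for an arbitrary aCM factorial $T$, with no such hypothesis, and in general factoriality of a threefold does \emph{not} descend to its surface sections (this is exactly the Noether--Lefschetz phenomenon). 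Finally, the lifting step in your equality case is, as you acknowledge, only a sketch; the paper's double-linkage argument sidesteps it entirely by never leaving $T$.
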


\begin{proof}
Consider a general hypersurface $P$ of degree $p\gg 0$ containing
$X$. Denote by $Y$ the scheme $T\cap \tg \cap P$ which we are
going to consider as a complete intersection in $T$. Following
Peskine-Szpiro \cite{PS}, we consider the scheme $R$ residual  of
$X$ with respect to $Y$ (compare also with \cite[\S 2]{FKL}).
\par The   \textit{Noether Linkage Sequence} \cite[Proposition 2.3]{FKL} inside $T$ looks like
\begin{equation}\label{NLS'}
0\to \ic_Y \to \ic _R \to \omega_X \otimes \omega_Y^{-1}\to 0,
\end{equation}
and can be written as
\begin{equation}\label{NLS}
0\to \ic_Y \to \ic _R \to \omega_X (-t-g-p)\to 0
\end{equation}
(all the ideal sheaves are meant to be defined in $T$).
Recall that
\begin{equation}
\label{not0}
h^0(\omega_X(-e))\not =0
\end{equation}
($e:=e_X$). Since $T$ is aCM and $Y$ is a complete intersection in
$T$ of type $(g,p)$, the short exact sequence
$$0\to  \oc _T(-g-p) \to \oc _T(-g) \oplus  \oc _T(-p)\to \ic_Y\to 0
$$
implies
$$\dots\to  H^1( \oc _T(l-g) \oplus  \oc _T(l-p))\to H^1(\ic_Y(l))\to H^2(\oc _T(l-g-p)) \to \dots \hskip2mm \forall l  $$
hence
\begin{equation}
\label{=0}
h^1(\ic_Y(t+g+p-e))=0.
\end{equation}
Combining (\ref{NLS}), (\ref{not0}) and (\ref{=0}), we see that  there exists a hypersurface $S$ of degree
$s=t+g+p-e$ containing $R$ and not containing $Y$. But $G$ is integral and
$Y'=G\cap S$ is a complete intersection, in $T$, containing $R$. Set $Y'=R\cup R'$ the corresponding,
possibly algebraic, linkage.
But then
$$deg(R')+deg(R)=deg(T)gs, \hskip2mm deg(X)+deg(R)=deg(T)gp$$ and by a simple computation, we find
$$deg(R')=deg(X)-deg(T)g(e-t-g)\geq 0$$ and the first statement follows.
\par
Suppose now the equality holds. Then we have  $$deg(X)=deg(T)g(e-t-g)=deg(T)g(p-s).$$
and the scheme $R'$ is empty.
Furthermore, we have that $R=Y'=G\cap S$ is a complete intersection with
$\omega _R\simeq \oc_R(t+g+s)$.
\par\noindent
Coming back to the Noether Linkage Sequence (\ref{NLS'})
$$
0\to \ic_Y \to \ic _X \to \omega_R \otimes \omega_Y^{-1}\to 0
$$
we find
\begin{equation}\label{NLSlast}
0\to \ic_Y \to \ic _X \to \oc _R (s-p)\to 0.
\end{equation}
Similarly as above, the short exact sequence
$$0\to  \oc _T(-g-s) \to \oc _T(-g) \oplus  \oc _T(-s)\to \ic_R\to 0
$$
implies
$$\dots\to  H^1( \oc _T(l-g) \oplus  \oc _T(l-s))\to H^1(\ic_Y(l))\to H^2(\oc _T(l-g-s)) \to \dots \hskip2mm \forall l  $$
and
$$h^1(\ic_R(p-s))=0.$$
Hence there is a hypersurface $H$ of degree $h=p-s$ containing $X$ and not containing $Y$.
Finally, since $G$ is integral and $deg(X)=deg(T)g(p-s)$ we conclude that $X=G\cap H$.
\end{proof}

\vskip3mm
\begin{proof}[Proof of Theorem \ref{Main} (2)]
It suffices to apply Theorem \ref{main} to the complete
intersection $T:=F\cap G$, which is aCM with $dimT= 3$ and
$\omega_T\simeq \oc_T(n+k-6)$, and  factorial in view of
Proposition \ref{prop}.
\end{proof}
\vskip3mm
\section{Proof of Theorem \ref{Main} (1)}
\bigskip
The proof of Theorem \ref{Main} (1) rests on the following much more general result:

\begin{theorem}
\label{main1} Let $T\subset \Pn$ be an aCM, factorial variety with
$dimT=m\geq 3$ and $\omega_T\simeq \oc_T(t)$. Assume moreover that
$T$ is smooth in codimension $2$ and that the very general surface
section of $T$ is factorial.  Let $X\subset T$ be a C.M. subscheme
of codimension $2$ which is generically complete intersection.  If
$h^0(\ic_{X,T}(h-1))=0$ and $h>0$ then
$$
e_X\leq \frac{deg(X)}{deg(T)h} +h +t.
$$
\end{theorem}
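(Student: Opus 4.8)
The plan is to deduce Theorem~\ref{main1} from Theorem~\ref{main} in two stages: first cut $T$ down to a threefold, then pass to a general hyperplane section of $X$ and run a Castelnuovo–Halphen argument for the resulting zero-dimensional scheme on a smooth surface. The essential difference with Theorem~\ref{main} is that here we are not handed a divisor through $X$ of degree $h$ to link with, so the linkage has to be performed on the hyperplane section instead.

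First I would reduce to $\dim T=3$. If $m\ge 4$, pick a very general hyperplane $H\subset\Pn$ and pass from $(T,X,t)$ to $(T\cap H,\,X\cap H,\,t+1)$: then $\deg(T\cap H)=\deg T$, $\deg(X\cap H)=\deg X$, $\omega_{T\cap H}\simeq\oc_{T\cap H}(t+1)$ by adjunction, $T\cap H$ is aCM and (Bertini) smooth in codimension $2$, and $X\cap H$ is again Cohen–Macaulay, of codimension two, generically a complete intersection; the vanishing $h^0(\ic_{X\cap H,\,T\cap H}(h-1))=0$ is inherited for general $H$ by a short cohomological check, and factoriality is inherited for very general $H$ (the bad locus in the dual space being a countable union of proper closed subsets, while the very general surface section of $T\cap H$ is again one of $T$). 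Moreover $e_{X\cap H}\ge e_X+1$: since $\omega_{X\cap H}\simeq\omega_X(1)|_{X\cap H}$, the sequence $0\to\omega_X(-e_X-1)\to\omega_X(-e_X)\to\omega_{X\cap H}(-e_X-1)\to 0$ together with $h^0(\omega_X(-e_X-1))=0$ forces $h^0(\omega_{X\cap H}(-e_X-1))>0$. As the asserted inequality for the section reads $e_{X\cap H}\le\frac{\deg X}{\deg T\cdot h}+h+t+1$, one concludes $e_X\le\frac{\deg X}{\deg T\cdot h}+h+t$; so by induction on $m$ we may assume $\dim T=3$, and then $X$ is a curve.

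For $\dim T=3$ I would argue as follows. As $T$ is aCM one has $H^1(\oc_T(j))=H^2(\oc_T(j))=0$, hence $H^2(\ic_{X,T}(j))\cong H^1(\oc_X(j))\cong H^0(\omega_X(-j))^{\vee}$ by Serre duality on $X$, so that $e_X=\max\{j:H^2(\ic_{X,T}(j))\ne 0\}$. Take a very general $H$, put $S:=T\cap H$ (smooth, factorial, aCM, with $\omega_S\simeq\oc_S(t+1)$, by the argument of Proposition~\ref{prop}) and $Z:=X\cap H$ (zero-dimensional, $\deg Z=\deg X$). From $0\to\ic_{X,T}(j-1)\to\ic_{X,T}(j)\to\ic_{Z,S}(j)\to 0$ read at $j=e_X+1$, where $H^2(\ic_{X,T}(e_X+1))=0$, one gets $H^1(\ic_{Z,S}(e_X+1))\ne 0$, i.e. $e_X\le\sigma_Z-1$ with $\sigma_Z:=\max\{j:H^1(\ic_{Z,S}(j))\ne 0\}$; the same sequence in low degrees turns $h^0(\ic_{X,T}(h-1))=0$ into a strong constraint on the Hilbert function and numerical character of $Z$ on $S$. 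It remains to prove the surface estimate $\sigma_Z\le\frac{\deg Z}{\deg S\cdot h}+h+(t+1)$ — via Macaulay's growth theorem for the one-dimensional quotient $R_S/I_Z$ and an analysis of the numerical character of $Z$ in the spirit of Gruson–Peskine — after which $e_X\le\sigma_Z-1\le\frac{\deg X}{\deg T\cdot h}+h+t$. Theorem~\ref{Main}~(1) then follows by taking $T=F\cap G$ (so $\dim T=3$, $\deg T=nk$, $\omega_T\simeq\oc_T(n+k-6)$, factorial by Proposition~\ref{prop}), $X=C$, $h=s$, using $h^0(\ic_{C,T}(s-1))=0$ (because $C$ lies on no hypersurface of degree $<s$ and $H^1(\ic_{T,\Pcq}(s-1))=0$).

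The hard part is the surface estimate in this last step. One needs it phrased in the free parameter $h$, not in the least degree of a hypersurface that actually contains $X$: the latter is the invariant a direct application of Theorem~\ref{main} controls, and it is in general strictly larger than $h$, so it yields a strictly weaker bound. Thus the real content is to show that the single condition $h^0(\ic_{X,T}(h-1))=0$ pins down the Hilbert function and numerical character of the zero-dimensional section $Z$ tightly enough to give the bound with $h$. A secondary but genuine difficulty is the descent of both factoriality and of the vanishing $h^0(\ic(h-1))=0$ through the intermediate hyperplane sections of the first stage; securing the former is exactly what forces the hypothesis that the very general surface section of $T$ be factorial.
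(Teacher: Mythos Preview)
Your route is genuinely different from the paper's, and the step you yourself flag as ``the hard part'' is a real gap that the outline does not close. The hypothesis $h^0(\ic_{X,T}(h-1))=0$ does not descend through hyperplane sections: from $0\to\ic_{X,T}(h-2)\to\ic_{X,T}(h-1)\to\ic_{Z,S}(h-1)\to 0$ one only obtains an injection $H^0(\ic_{Z,S}(h-1))\hookrightarrow H^1(\ic_{X,T}(h-2))$, and the latter group has no reason to vanish (the same obstruction already appears in your first reduction from dimension $m$ to $m-1$). So after cutting you may well have $h^0(\ic_{Z,S}(h-1))>0$, and then the Gruson--Peskine numerical-character machinery is being asked to produce the bound $\sigma_Z\le \deg Z/(\deg S\cdot h)+h+(t+1)$ with a parameter $h$ strictly smaller than the least degree of a curve on $S$ through $Z$. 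You acknowledge that some residual constraint on the Hilbert function of $Z$ must be extracted from $X$, but you do not say what it is or how it forces the inequality; without that, the argument is a plan missing its central lemma, and it is not clear that such a lemma exists in the generality you need.

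The paper does not cut $X$ at all. It builds, via the Serre correspondence (Proposition~\ref{sc}), a normalized rank-two reflexive sheaf $\fc$ on $T$ sitting in $0\to\oc_T\to\fc(k)\to\ic_{X,T}(e_X-t)\to 0$, so that $\deg X-\deg(T)\,h\,(e_X-h-t)$ is the degree of $c_2$ of a specific twist of $\fc$. If the discriminant $c_1^2-4c_2$ is nonpositive this degree is automatically nonnegative and the bound follows. If the discriminant is positive, Bogomolov instability on the very general factorial surface section (Proposition~\ref{bogomolov}; this is exactly where that hypothesis enters) produces a section of some nonpositive twist $\fc(\alpha)$ vanishing in codimension two; comparing the two minimal twists carrying sections against the hypothesis $h^0(\ic_{X,T}(h-1))=0$ then forces the relevant $c_2$ to have nonnegative degree. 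The condition on $h$ is thus used globally, through the sheaf on $T$, and never has to survive a restriction.
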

\vskip2mm
\par\noindent
The main idea in the proof of \ref{main1}, which goes back to the work of Hartshorne, is to construct a rank two relexive sheaf on $T$ having a section vanishing in $X$ (see e.g. \cite{Hrs} and \cite{BF}).
\par\noindent
In order to prove Theorem \ref{main1}, we need some preliminary results.
We recall the following result of R. Hartshorne:
\begin{lemma}\cite[Proposition 1.3]{Hrs}
\label{reflexive}
Let $T$ be a normal scheme and let $\fc $ be a coherent sheaf defined on $T$. Then $\fc $ is reflexive iff
\begin{enumerate}
\item $\fc $ is torsion-free;
\item $\forall x\in T$, $dim \oc_x\geq 2 \Longrightarrow depth\fc _x \geq2$.
\end{enumerate}
\end{lemma}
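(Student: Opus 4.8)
The plan is to reduce everything to commutative algebra, since reflexivity — the condition that the canonical biduality map $\phi_{\fc}\colon\fc\to\fc^{\vee\vee}$ be an isomorphism, where $\fc^{\vee}:=\mathcal{H}om_{\oc_T}(\fc,\oc_T)$ — as well as conditions (1) and (2) are all local. Thus it suffices to work over $R:=\oc_{T,x}$, a Noetherian normal local domain, with $M:=\fc_x$ a finite $R$-module. The single technical engine I would set up first is a depth estimate for dual modules. Starting from a finite presentation $R^{a}\to R^{b}\to N\to 0$ and applying $\mathcal{H}om_R(-,R)$, one obtains a left-exact sequence $0\to N^{\vee}\to R^{b}\to R^{a}$, which exhibits $N^{\vee}$ as a second syzygy inside a free module. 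Tracking depths along this complex — equivalently, through the long exact sequence in local cohomology $H^{i}_{\mathfrak{m}}$ — yields $\operatorname{depth}N^{\vee}\ge\min(2,\operatorname{depth}R)$. Because $T$ is normal, Serre's criterion forces $R$ to satisfy $S_2$, so $\operatorname{depth}R\ge\min(2,\dim R)$; hence whenever $\dim R\ge 2$ every dual module has depth $\ge 2$, and in every case a dual is torsion-free, being a submodule of the free module $R^{b}$.

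For the forward implication I would observe that a reflexive sheaf is itself a dual, $\fc\cong(\fc^{\vee})^{\vee}$, and then simply read off the two conditions from the estimate above. Torsion-freeness is automatic for a dual, which gives (1); and at every point $x$ with $\dim\oc_{T,x}\ge 2$ the bound $\operatorname{depth}\fc_x=\operatorname{depth}(\fc^{\vee})^{\vee}_x\ge\min(2,\operatorname{depth}\oc_{T,x})=2$ gives (2).

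For the converse I would assume (1) and (2) and show $\phi_{\fc}$ is an isomorphism. Torsion-freeness forces $\ker\phi_{\fc}=0$, since the kernel of biduality over the domain $R$ is precisely the torsion submodule; so there is a short exact sequence $0\to\fc\xrightarrow{\phi_{\fc}}\fc^{\vee\vee}\to\qc\to 0$ with $\qc:=\operatorname{coker}\phi_{\fc}$, and the goal becomes $\qc=0$. At every point $x$ of codimension $\le 1$ the ring $\oc_{T,x}$ is a field or, by normality, a discrete valuation ring, over which the finite torsion-free module $\fc_x$ is free and hence reflexive; thus $\phi_{\fc}$ is an isomorphism there and $\operatorname{Supp}\qc$ lies in the codimension $\ge 2$ locus. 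If $\qc$ were nonzero it would admit an associated prime $p$, necessarily of height $\ge 2$, at which $\operatorname{depth}\qc_p=0$. Localizing the short exact sequence at $p$ and applying the depth lemma $\operatorname{depth}\qc_p\ge\min(\operatorname{depth}\fc_p-1,\operatorname{depth}\fc^{\vee\vee}_p)$, together with $\operatorname{depth}\fc_p\ge 2$ (from (2)) and $\operatorname{depth}\fc^{\vee\vee}_p\ge 2$ (from the dual-depth estimate, as $\fc^{\vee\vee}$ is a dual), yields $\operatorname{depth}\qc_p\ge 1$, a contradiction. Hence $\qc=0$ and $\fc$ is reflexive.

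The main obstacle is the dual-depth estimate of the first paragraph: the rest is formal homological algebra and the elementary geometry of a normal scheme in codimension one, but the inequality $\operatorname{depth}N^{\vee}\ge\min(2,\operatorname{depth}R)$ is exactly what transmits the $S_2$-property of the normal ring $\oc_T$ to all of its duals, and it is what makes both condition (2) in the forward direction and the depth of $\fc^{\vee\vee}$ in the converse available. I would therefore concentrate the care there, using the local cohomology of the two-term complex $R^{b}\to R^{a}$ to secure the vanishing of $H^{0}_{\mathfrak{m}}$ and $H^{1}_{\mathfrak{m}}$ of $N^{\vee}$.
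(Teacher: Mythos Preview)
The paper does not give its own proof of this lemma: it is quoted verbatim from Hartshorne's \emph{Stable reflexive sheaves} (Proposition~1.3) and used as a black box. Your argument is correct and is essentially the standard one underlying Hartshorne's result---reducing to a normal local domain, using that duals are second syzygies to get the depth bound $\operatorname{depth}N^{\vee}\ge\min(2,\operatorname{depth}R)$, and then killing the cokernel of biduality by a depth contradiction on an associated prime. Since there is no proof in the paper to compare against, there is nothing further to say on that front; your proposal stands on its own as a clean self-contained justification of the cited fact.
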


For the sake of completeness, we give a short proof of the following (maybe well known) result.
\begin{lemma}
\label{ext} Let $T\subset \Pn$ be an aCM  scheme such that
$m:=dimT\geq 3$ and $\omega_T\simeq \oc_T(t)$. Let $X\subset T$ be
a Cohen-Macaulay     subscheme of codimension $2$.
\par\noindent Then we have:
$$Ext^1_T(\ic_{X,T}(c), \oc_T)\simeq H^0(X,\omega_X(-c-t)), \hskip2mm \forall c\in \mathbb{Z}.$$
\end{lemma}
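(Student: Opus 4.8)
The goal is to identify $\mathrm{Ext}^1_T(\ic_{X,T}(c),\oc_T)$ with $H^0(X,\omega_X(-c-t))$. The plan is to exploit the standard exact sequence
$$
0\to \ic_{X,T}\to \oc_T\to \oc_X\to 0,
$$
twist by $\oc_T(c)$, apply $\mathrm{Hom}_T(-,\oc_T)$, and read off the relevant $\mathrm{Ext}$ term via the long exact sequence. Concretely, one gets
$$
\cdots\to \mathrm{Ext}^1_T(\oc_T(c),\oc_T)\to \mathrm{Ext}^1_T(\ic_{X,T}(c),\oc_T)\to \mathrm{Ext}^2_T(\oc_X(c),\oc_T)\to \mathrm{Ext}^2_T(\oc_T(c),\oc_T)\to\cdots
$$
Since $\mathrm{Ext}^i_T(\oc_T(c),\oc_T)\simeq H^i(T,\oc_T(-c))$ and $T$ is aCM of dimension $m\ge 3$, the groups $\mathrm{Ext}^1_T(\oc_T(c),\oc_T)=H^1(T,\oc_T(-c))$ and $\mathrm{Ext}^2_T(\oc_T(c),\oc_T)=H^2(T,\oc_T(-c))$ both vanish. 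Hence the connecting map gives an isomorphism
$$
\mathrm{Ext}^1_T(\ic_{X,T}(c),\oc_T)\simeq \mathrm{Ext}^2_T(\oc_X(c),\oc_T).
$$

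The second step is to compute $\mathrm{Ext}^2_T(\oc_X(c),\oc_T)$ by duality. Since $X$ has codimension $2$ in $T$ and is Cohen-Macaulay, while $T$ is Cohen-Macaulay with dualizing sheaf $\omega_T\simeq\oc_T(t)$, the sheafified local duality / adjunction for Cohen-Macaulay subschemes gives
$$
\mathcal{E}xt^q_T(\oc_X,\omega_T)=0 \quad\text{for } q\neq 2,\qquad
\mathcal{E}xt^2_T(\oc_X,\omega_T)\simeq \omega_X,
$$
which is the usual description of the dualizing sheaf of a codimension-two CM subscheme. Rewriting $\omega_T=\oc_T(t)$, this reads $\mathcal{E}xt^2_T(\oc_X,\oc_T)\simeq\omega_X(-t)$ and $\mathcal{E}xt^q_T(\oc_X,\oc_T)=0$ for $q<2$. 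Feeding this into the local-to-global spectral sequence $H^p(T,\mathcal{E}xt^q_T(\oc_X(c),\oc_T))\Rightarrow \mathrm{Ext}^{p+q}_T(\oc_X(c),\oc_T)$, the vanishing of the low $\mathcal{E}xt$ sheaves forces the edge map
$$
\mathrm{Ext}^2_T(\oc_X(c),\oc_T)\simeq H^0\bigl(T,\mathcal{E}xt^2_T(\oc_X(c),\oc_T)\bigr)\simeq H^0\bigl(X,\omega_X(-c-t)\bigr),
$$
which is exactly the claimed identification. Chaining the two isomorphisms yields the lemma for every $c\in\mathbb{Z}$.

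The only genuine point requiring care — and the step I would expect to be the main obstacle in writing this cleanly — is justifying $\mathcal{E}xt^q_T(\oc_X,\oc_T)=0$ for $q<2$ together with $\mathcal{E}xt^2_T(\oc_X,\oc_T)\simeq\omega_X(-t)$ when $T$ is only assumed aCM (hence Cohen-Macaulay but possibly singular), rather than smooth. This follows from local duality on the Cohen-Macaulay scheme $T$: for each point $x\in X$, $\oc_{X,x}$ is a CM module of codimension $2$ over the CM local ring $\oc_{T,x}$, so its only nonvanishing $\mathrm{Ext}$ against $\omega_{T,x}$ sits in degree $2$ and equals the canonical module $\omega_{\oc_{X,x}}$; globalizing and untwisting by $\oc_T(t)$ gives the stated sheaf isomorphism. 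With that in hand the rest is a routine two-line chase through the long exact sequence and the degenerate spectral sequence, using only that $T$ is aCM so that $H^1$ and $H^2$ of line bundles on $T$ vanish in the relevant range.
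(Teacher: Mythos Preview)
Your proof is correct and follows essentially the same route as the paper: both apply $\mathrm{Hom}_T(-,\oc_T)$ to the ideal-sheaf sequence, use the aCM hypothesis with $m\ge 3$ to kill $\mathrm{Ext}^i_T(\oc_T(c),\oc_T)$ for $i=1,2$, and then identify $\mathrm{Ext}^2_T(\oc_X(c),\oc_T)$ with $H^0(X,\omega_X(-c-t))$. The only cosmetic difference is that the paper carries out this last identification via two applications of global Serre duality (on $T$ and then on $X$) rather than via the local-to-global spectral sequence together with the sheaf-$\mathcal{E}xt$ description of $\omega_X$.
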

\begin{proof}
By applying the functor $Hom_T(\cdot, \oc_T)$ to the short exact sequence
$$
0 \to \ic_{X,T}(c) \to \oc_T(c) \to \oc_X(c) \to 0
$$
we find
\begin{multline*}
Ext^1_T(\oc_{T}(c), \oc_T)\to  Ext^1_T(\ic_{X,T}(c), \oc_T) \to \\
\to Ext^2_T(\oc_{X}(c), \oc_T) \to Ext^2_T(\oc_{T}(c), \oc_T).
\end{multline*}
By Serre Duality,
 $\omega_T\simeq \oc_T(t)$ implies
 $$Ext^i_T(\oc_{T}(c), \oc_T)\simeq H^{m-i}(\oc_T(-c-t))=0, \hskip2mm i= 1, 2$$
where the last equality follows from the hypothesis
  that  $T$ is aCM of dimension $\geq 3$. Again by Serre Duality we have:
\begin{multline*}
Ext^1_T(\ic_{X,T}(c), \oc_T) \simeq Ext^2_T(\oc_{X}(c), \omega_T(-t))\simeq \\
 H^{m-2}(T,\oc _X(c+t))\simeq H^{m-2}(X,\oc _X(c+t))\simeq H^{0}(X,\omega
 _X(-c-t)).
\end{multline*}
\end{proof}

\begin{proposition}
\label{sc} Let $T\subset \Pn$ be an aCM  variety such that
$m:=dimT\geq 3$ and $\omega_T\simeq \oc_T(t)$. We assume
additionally  that $T$ is smooth in codimension $2$. For any pair
$(X, \xi)$ with:
\begin{itemize}
\item $X\subset T$ a Cohen-Macaulay, generically complete intersection subscheme of codimension two in $T$,
\item $\xi \in H^0(\omega_X(-t-c))$ generating almost everywhere,
\end{itemize}
there exists  a rank two reflexive sheaf $\fc $  on $T$, with $c_1(\fc )=cH$, $c_2(\fc )=[X]$
(the fundamental cycle of $X$) and such that
\begin{equation}
\label{extension} 0 \rightarrow \oc_T \rightarrow \fc  \rightarrow
\ic _{X\mid T}(c) \rightarrow 0.
\end{equation}
\end{proposition}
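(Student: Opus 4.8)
The plan is to build the reflexive sheaf $\fc$ from the extension class corresponding to $\xi$ via the Serre correspondence. By Lemma \ref{ext}, the section $\xi \in H^0(\omega_X(-t-c))$ determines an element of $Ext^1_T(\ic_{X\mid T}(c), \oc_T)$, which in turn gives a short exact sequence of the form (\ref{extension}) with $\fc$ a coherent sheaf on $T$. The first thing I would do is verify that $\fc$ has the stated Chern classes: $c_1(\fc) = cH$ is immediate from the rank and determinant of the sequence, and $c_2(\fc) = [X]$ follows from the Whitney formula applied to (\ref{extension}), using that $X$ has codimension two so $\ic_{X\mid T}(c)$ has the expected Chern behaviour — here the hypothesis that $T$ is smooth in codimension $2$ ensures these Chern classes make sense in the relevant Chow/cohomology groups where $X$ lives.

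The substantive part is showing $\fc$ is reflexive, and for this I would invoke Hartshorne's criterion, Lemma \ref{reflexive}. Torsion-freeness is easy: $\fc$ is an extension of the torsion-free sheaf $\ic_{X\mid T}(c)$ by the torsion-free $\oc_T$, so any torsion subsheaf would inject into $\ic_{X\mid T}(c)$, hence vanish. For the depth condition, I would look at a point $x \in T$ with $\dim \oc_x \geq 2$ and chase depths through the localized sequence $0 \to \oc_{T,x} \to \fc_x \to (\ic_{X\mid T})_x(c) \to 0$. When $x \notin X$ the middle term is locally free, so there is nothing to prove; when $x \in X$, since $X$ is generically complete intersection but possibly not everywhere so, one must be more careful. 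The key point is that the extension class is \emph{locally} the one attached to $\xi$ generating $\omega_X$ almost everywhere: at a point where $X$ is a local complete intersection in the regular local ring $\oc_{T,x}$, the Serre construction produces a locally free $\fc_x$ (this is the classical Serre correspondence), giving $depth\, \fc_x = \dim \oc_x \geq 2$. At the finitely many bad points — where either $X$ fails to be lci or $T$ is singular, but these lie in codimension $\geq 3$ since $X$ is generically lci in $T$ and $T$ is smooth in codimension $2$ — one uses instead the depth estimate $depth\, \fc_x \geq \min(depth\, \oc_{T,x}, depth\, (\ic_{X\mid T})_x(c) + 1)$ coming from the long exact sequence of local cohomology associated to (\ref{extension}): since $T$ is aCM (hence Cohen-Macaulay, so $depth\, \oc_{T,x} = \dim \oc_x$) and $\ic_{X\mid T}$ has depth at least $\min(2, \dim \oc_x)$ as the ideal sheaf of a codimension-two Cohen-Macaulay subscheme, one concludes $depth\, \fc_x \geq 2$.

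I expect the main obstacle to be the bookkeeping at these finitely many bad points — reconciling the hypothesis "$\xi$ generating almost everywhere'' with the need to control $\fc$ there. The cleanest route is probably to argue purely with depths rather than trying to exhibit $\fc$ locally free everywhere: one never needs local freeness, only $depth \geq 2$, and that follows from the Cohen-Macaulayness of $T$ together with the codimension-two Cohen-Macaulay hypothesis on $X$, by the depth comparison in the extension. One should also double-check that the construction is canonical enough that $c_2(\fc)$ really equals the fundamental cycle $[X]$ and not some subscheme-theoretically smaller cycle; this is where "generically complete intersection'' together with "$\xi$ generating almost everywhere'' is used, ensuring the zero locus of the induced section of $\fc$ is exactly $X$ with its natural scheme structure away from codimension $\geq 3$, which suffices to identify the cycle classes.
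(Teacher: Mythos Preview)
Your approach is essentially the same as the paper's: construct the extension via Lemma \ref{ext}, then verify reflexivity through Hartshorne's criterion (Lemma \ref{reflexive}) by splitting into codimension $\geq 3$ points (a depth/Ext chase using that $T$ and $X$ are Cohen--Macaulay) and codimension $2$ points (where $\fc$ is shown locally free via the Serre construction, using that $T$ is regular, $X$ is lci, and $\xi$ generates there). One small slip: the depth inequality coming from the extension should read $depth\,\fc_x \geq \min\bigl(depth\,\oc_{T,x},\, depth\,(\ic_{X\mid T})_x\bigr)$ without the ``$+1$'', but since $depth\,(\ic_{X\mid T})_x \geq 2$ at points of codimension $\geq 3$ (from $0\to\ic_{X\mid T}\to\oc_T\to\oc_X\to 0$ and Cohen--Macaulayness), your conclusion is unaffected.
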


\begin{proof} The assertion concerning the Chern classes follows trivially from the rest of the statement
so it suffices to prove the existence  of a sequence like (\ref{extension}), with $\fc$ reflexive.
\par
The existence of a sequence like (\ref{extension})   follows
directly from Lemma \ref{ext}. Since $T$ is Cohen-Macaulay and
smooth in codimension $2$, it is also normal by Serre's criterion.
Then we may apply Lemma \ref{reflexive} in order to prove the
reflexivity of $\fc$. Further, since $T$ is Cohen-Macaulay, both
$\oc_T$ and $\ic _{X,T}(c)$ are torsion-free hence we  only need
to prove the second condition of Lemma \ref{reflexive}. Fix a
point $x$ of codimension $\geq 3$ and denote by $\bK$ the residue
field at $x$. Applying the functor $Hom_{\oc_x}(\bK , \cdot)$ to
the  sequence
$$0 \rightarrow \ic _{x,X\mid T} \rightarrow \oc_{x,T}  \rightarrow \oc_{x,X} \rightarrow 0
$$
and recalling that both $T$ and $X$ are Cohen-Macaulay we have:
\begin{equation}
\label{ext0}
Ext^i_{\oc_x}(\bK , \ic _{x,X\mid T})=0 , \hskip2mm i\leq 2.
\end{equation}
Applying the functor $Hom_{\oc_x}(\bK , \cdot)$ to the  sequence
$$0 \rightarrow \oc_{x,T} \rightarrow \fc_x  \rightarrow \ic _{x,X\mid T}(c) \rightarrow 0  $$
we see that the vanishing ($\ref{ext0}$)  implies:
 $$\forall x\in X, \hskip2mm dim \oc_x\geq 3 \Longrightarrow depth\fc _x \geq2 .$$
In order to conclude we  need to prove:
$$\forall x\in X, \hskip3mm dim \oc_x= 2 \Longrightarrow depth\fc _x \geq2.
$$
 What we are going to do is to prove that
$\fc _x$ is a free  module of rank two over $\oc _x$, for any
$x\in X$ such that $dim \oc_x= 2$. In order to do this, we prove
that $\fc _x$ has homological dimension $0$ (\cite[IV]{Serre}).
Since $T$ is smooth in codimension $2$, $\forall x\in X$ of
codimension $2$ the local ring $\oc _x$ is regular of dimension
$2$. So it suffices to prove that
\begin{equation}
\label{suffices} \ec xt^1_{T}(\fc ,\oc_T )_x=\ec xt^2_{T}(\fc
,\oc_T )_x=0.
\end{equation}
From the sequence (\ref{extension}) we see that $dh(\fc _x)\leq
dh(\ic _{X\mid T})=1$, the first inequality coming from (\cite[IV
p. 28]{Serre}) and  the last equality coming from the fact that
$\ic _{X\mid T}$ is complete intersection at $x$. So, in order to
prove (\ref{suffices}) we are left to show that $\ec xt^1_{T}(\fc
,\oc_T )_x=0$. Applying $\hc om_T(\cdot , \oc_T(c))$ to the
sequence ($\ref{extension}$) we get:
\begin{equation}
\label{se5}
0 \to \oc_T \to \fc^* (c)\to \oc_T \stackrel{\xi }{\to }\omega_X(-t) \to
\ec xt^1_{T}(\fc ,\oc_T(c) )\to 0
\end{equation}
where we have taken into account the isomorphism $\ec xt^1_{T}(\ic
_{X,T},\oc_T)\simeq \omega_X(-t)$, which again follows from the
fact that both $T$ and $X$ are Cohen-Macaulay and $\omega_T\simeq
\oc_T(t)$. Since $T$ is smooth in codimension $2$, $\forall x\in
X$ of codimension $2$ the local ring $\oc _x$ is regular of
dimension $2$. Furthermore, since $\xi $  generates almost
everywhere and $X$ is generically complete intersection, the
fourth map of the  sequence (\ref{se5}) is an isomorphism at $x$
hence $\ec xt^1_{T}(\fc ,\oc_T(c) )_x\simeq 0$ and $\fc _x$ is a
free module of rank two over $\oc_x$.
\end{proof}

\begin{lemma}
\label{1section} Let $C\subset \Pn $ be a smooth variety and $E$ a
rank two vector bundle on $C$ having a section vanishing in the
right dimension. If $c_1(E)< 0$ then $h^0(E)=1$ and
$h^0(E(-m))=0$, $\forall m>0$.
\end{lemma}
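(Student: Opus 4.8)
The plan is to reduce the vanishing statement $h^0(E(-m))=0$ for $m>0$ to the single claim $h^0(E(-1))=0$, and then to derive the latter together with $h^0(E)=1$ from the hypothesis that $E$ has a section $s$ whose zero locus $Z(s)$ has the expected codimension two. First I would record the basic exact sequence attached to such a section. Since $s$ vanishes in codimension $2$ on the smooth (hence locally factorial) variety $C$, the Koszul-type resolution gives
\begin{equation}
\label{KoszulE}
0 \to \oc_C \stackrel{s}{\to} E \to \ic_{Z(s)\mid C}(c_1(E)) \to 0,
\end{equation}
where $Z:=Z(s)$ is the vanishing scheme. Twisting by $\oc_C(-m)$ for any $m\geq 0$ yields
\begin{equation}
\label{KoszulEtwist}
0 \to \oc_C(-m) \to E(-m) \to \ic_{Z\mid C}(c_1(E)-m) \to 0.
\end{equation}

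Next I would exploit the sign hypothesis $c_1(E)<0$. For $m\geq 0$ the twist $\ic_{Z\mid C}(c_1(E)-m)$ is a subsheaf of $\oc_C(c_1(E)-m)$, and since $c_1(E)-m<0$ the line bundle $\oc_C(c_1(E)-m)$ has no global sections on the integral projective variety $C$; hence $h^0(\ic_{Z\mid C}(c_1(E)-m))=0$. Feeding this into the long exact sequence in cohomology coming from (\ref{KoszulEtwist}) gives $h^0(E(-m))\cong h^0(\oc_C(-m))$. For $m=0$ this is $h^0(E)\cong h^0(\oc_C)=1$ — and here the map $\oc_C \to E$ is exactly $s$, so the one-dimensional space of sections of $E$ is spanned by $s$ itself, with no room for a second independent section that could vanish in lower codimension. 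For $m>0$ we get $h^0(E(-m))\cong h^0(\oc_C(-m))=0$, again because $C$ is integral and projective so $\oc_C(-m)$ has no sections. This gives both assertions at once.

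The only delicate point is the validity of the sequence (\ref{KoszulE}): one must know that a section of a rank two bundle on a smooth variety, whose zero scheme has codimension exactly two (the ``vanishing in the right dimension'' hypothesis), is automatically a regular section, so that the Eagon--Northcott/Koszul complex degenerates to the short exact sequence displayed. This is classical — it follows from the fact that $Z(s)$ being of expected codimension makes the local equations of $Z$ in $E$ a regular sequence, so $E^\vee \to \ic_{Z}$ is surjective and its kernel is $\oc_C$ via $s^\vee$; dualizing gives (\ref{KoszulE}) up to the twist by $\det E = \oc_C(c_1(E))$. I expect the main (minor) obstacle to be merely making this standard Koszul reduction precise in the relative setting; once (\ref{KoszulE}) is in hand, the rest is the one-line cohomological argument above using $c_1(E)<0$ and the integrality of $C$.
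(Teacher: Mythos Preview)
The paper states this lemma without proof, presumably regarding it as standard. Your argument via the section sequence
\[
0 \to \oc_C \stackrel{s}{\to} E \to \ic_{Z}(c_1(E)) \to 0
\]
is exactly the expected one and is correct: the key point, which you identify, is that $\ic_{Z}(c_1(E)-m)\hookrightarrow \oc_C(c_1(E)-m)$ has no sections once $c_1(E)-m<0$, since $C\subset\Pn$ is integral projective and $\oc_C(1)$ is (very) ample. Your remark that the given section $s$ spans $H^0(E)$ is also correct, since the map $H^0(\oc_C)\to H^0(E)$ induced by $s$ is the isomorphism in question. (Your opening sentence promises a reduction from general $m>0$ to $m=1$ that you never actually perform---and do not need, as your direct argument handles all $m\geq 0$ at once---but this is only a cosmetic inconsistency in the write-up.)
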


\begin{proposition}
\label{bogomolov} Let $T\subset \Pn$ be an aCM, factorial  variety
such that $m:=dimT\geq 3$ and $\omega_T\simeq \oc_T(t)$. We assume
additionally  that $T$ is smooth in codimension $2$ and  that the
general hyperplane section of $T$ is factorial.  Let $\fc $ be a
normalized (i.e. with $-1\leq c_1(\fc)\leq 0$) reflexive sheaf on
$T$. If $d(c_1(\fc )\cdot c_1(\fc)) > 4d(c_2(\fc ))$ then there
exists $\alpha \leq 0$ such that $h^0(\fc (\alpha))\not =0$.
Furthermore, if $c_1(\fc )=0$ then $\alpha <0$ hence    we have
$c_1(\fc (\alpha))< 0$.
\end{proposition}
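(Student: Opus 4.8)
The plan is to cut $T$ down to a general surface section $S$, run Bogomolov's instability theorem there, and then lift the destabilizing subsheaf back up to $T$; the lifting step is where I expect the real work to be.

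First I would choose $S=T\cap H_1\cap\dots\cap H_{m-2}$ with the $H_i$ general. Since $T$ is smooth in codimension $2$, $S$ is a smooth surface disjoint from $\operatorname{Sing}(T)$; since $T$ is aCM, so is $S$, whence $H^1(\oc_S)=0$ and $\operatorname{Pic}(S)=NS_1(S;\mathbb Z)$; and, arguing as in the proof of Proposition \ref{prop} (i.e. combining Theorems \ref{Monodromy} and \ref{RCMP} with the hypothesis that the hyperplane sections of $T$ are factorial), $S$ is Noether-Lefschetz general, so $\operatorname{Pic}(S)=\mathbb Z H_S$. The non-locally-free locus of the reflexive sheaf $\fc$ has codimension $\geq 3$ in $T$, hence misses $S$, so $\fc_S:=\fc|_S$ is a rank two vector bundle with $c_i(\fc_S)=c_i(\fc)|_S$; thus $c_1(\fc_S)^2=d(c_1(\fc)\cdot c_1(\fc))$ and $c_2(\fc_S)=d(c_2(\fc))$, and the hypothesis reads precisely $c_1(\fc_S)^2>4c_2(\fc_S)$.

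Next I would invoke Bogomolov's instability theorem on the smooth polarized surface $(S,H_S)$: since $c_1(\fc_S)^2>4c_2(\fc_S)$, the bundle $\fc_S$ is Bogomolov unstable, so it contains a saturated rank one subsheaf, necessarily a line bundle $\oc_S(A)\hookrightarrow\fc_S$, whose discriminant $\delta:=2A-c_1(\fc_S)$ lies in the positive cone, i.e. $\delta^2>0$ and $\delta\cdot H_S>0$. Writing $c_1(\fc)=\varepsilon H$ with $\varepsilon\in\{-1,0\}$ (this is where the normalization is used) and $A=aH_S$ with $a\in\mathbb Z$ (allowed since $\operatorname{Pic}(S)=\mathbb Z H_S$), the inequality $\delta\cdot H_S=(2a-\varepsilon)H_S^2>0$ forces $2a>\varepsilon\geq-1$, hence $a\geq0$, and $a\geq1$ when $\varepsilon=0$. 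Twisting $\oc_S(a)\hookrightarrow\fc_S$ by $\oc_S(-a)$ gives $0\neq\operatorname{Hom}_S(\oc_S,\fc_S(-a))=H^0(S,\fc_S(-a))$, with $-a\leq0$ and with $-a<0$ when $c_1(\fc)=0$.

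The hard part is then to produce a section of $\fc(-a)$ on $T$ itself, since $H^0(T,\fc(-a))\to H^0(S,\fc_S(-a))$ need not be surjective. Here I would use that the destabilizing subsheaf of $\fc_S$ is \emph{canonical}: a Hodge index argument shows there is a unique rank one saturated subsheaf of $\fc_S$ with discriminant in the positive cone. Hence, letting $S$ vary in a Lefschetz pencil of surface sections, these subsheaves are monodromy invariant and glue, in the manner of Mehta-Ramanathan, to a rank one reflexive subsheaf $\lc\subset\fc$ on $T$ with $\lc|_S=\oc_S(a)$. Since $T$ is factorial, $\lc=\oc_T(L)$ for a Cartier divisor $L$; since $T$ is aCM (so $\operatorname{Pic}^0(T)=0$) and $NS_1(T;\mathbb Z)$ is generated by $\oc_T(1)$ — by the same Noether-Lefschetz input, and already by Theorem \ref{CCM} in the setting of Theorem \ref{Main} — the relation $\lc|_S=\oc_S(a)$ forces $L=aH$, so $\oc_T(a)\hookrightarrow\fc$ and $h^0(T,\fc(-a))\neq0$. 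Taking $\alpha:=-a$ finishes the proof, with $\alpha\leq0$ in general and $\alpha<0$ when $c_1(\fc)=0$. Alternatively, one could skip the surface reduction and the gluing by quoting a Bogomolov instability theorem for rank two reflexive sheaves on a normal polarized variety directly on $T$, the same factoriality and N\'eron-Severi remarks then applying verbatim.
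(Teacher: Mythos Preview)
Your reduction to a surface section and application of Bogomolov's theorem matches the paper's opening move exactly. The divergence is entirely in the lifting step, and here your proposal has a genuine gap. You want to glue the destabilizing sub-line-bundles over a pencil \`a la Mehta--Ramanathan to obtain $\lc\subset\fc$ on $T$, and then identify $\lc=\oc_T(a)$ using $NS_1(T;\mathbb Z)=\mathbb Z H$. But $NS_1(T;\mathbb Z)=\mathbb Z H$ is \emph{not} among the hypotheses of the proposition: $T$ is only assumed aCM, factorial, subcanonical, smooth in codimension $2$, with factorial hyperplane sections. Your justification ``by the same Noether--Lefschetz input'' imports Proposition~\ref{prop}, which applies only in the specific setting of Theorem~\ref{Main}, not in the generality of Proposition~\ref{bogomolov}. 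Without knowing that $\lc$ is a power of $\oc_T(1)$, the inclusion $\lc\hookrightarrow\fc$ does not produce a section of any $\fc(\alpha)$. The gluing itself is also left as a gesture; making it precise is essentially the content of the proof.

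The paper avoids both problems by never trying to identify a line bundle on $T$. Instead it fixes a general curve section $C\subset S$, observes via Lemma~\ref{1section} that $h^0(C,\fc(\alpha)|_C)=1$ and $h^0(C,\fc(\beta)|_C)=0$ for $\beta<\alpha$, and then considers the incidence variety $\tilde T\to\pc\simeq\mathbb P^{m-2}$ of \emph{all} surface sections through $C$. A semicontinuity argument against the rigid curve $C$ shows $h^0(S',\fc(\alpha)|_{S'})=1$ for every fibre $S'$; Grauert's theorem then makes $\psi_*\phi^*\fc(\alpha)$ a line bundle on $\pc$, which the restriction to $C$ trivializes. Hence $H^0(\tilde T,\phi^*\fc(\alpha))=\mathbb C$, and the projection formula (using $\phi_*\oc_{\tilde T}\simeq\oc_T$) lifts the section to $T$ directly. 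This is the missing idea in your proposal: use the curve $C$ as a rigidifying base locus so that the section itself, rather than an abstract sub-line-bundle, extends.
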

\begin{proof} Let us denote by $S$ the general (smooth)
surface section of $T$. Since $\fc\mid_S$ is a normalized rank $2$
vector bundle on $S$, Bogomolov's theorem implies there exists
$\alpha \leq 0$ such that $h^0(S,\fc(\alpha) \mid_S )\not =0$.
Moreover, we can assume $\alpha <0$ as soon as $c_1(\fc\mid
_S)=0$. Bogomolov's theorem implies that a section of $\fc\mid _S
(\alpha)$  can be chosen in such a way that it vanishes in the
right dimension. In any case we have $c_1(\fc\mid _S (\alpha))<0$,
so Lemma \ref{1section} above implies $h^0(S,\fc(\alpha) \mid_S
)=1$.
\par Fix $C\subset S$ a general curve section of $T$. We can
assume that $C$ does not meet the zero locus of the general
section of $\fc\mid _S (\alpha)$ so Lemma \ref{1section} implies:
\begin{equation}
\label{restrictiontoC}
h^0(C,\fc(\alpha) \mid_C )=1 \hskip3mm \textrm{and} \hskip3mm h^0(C,\fc(\beta) \mid_C )=0
\hskip2mm \forall \beta <\alpha.
\end{equation}
Set
$$\pc\simeq \mathbb{P}^{m-2} = \{ \pi \in \mathbb{G} (n-m+2,\Pn ):
\hskip1mm C\subset \pi \} \subset\mathbb{G} (n-m+2,\Pn ),$$
denote by $\tT \subset T\times \pc$ the incidence variety:
$$\tT=\{ (x,\pi) \in T\times \pc: \hskip1mm x\in \pi \cap T \}$$
and by $\phi :\tT \to T$, $\psi :\tT \to \pc$ the natural maps.
\begin{claim}
 $h^0(\psi^{-1}(p), \fc (\alpha)\mid _{\psi^{-1}(p)})=1$, \hskip2mm
$\forall p\in \pc$.
\end{claim}
\par\noindent
As we have just said $h^0(\psi^{-1}(p), \fc (\alpha)\mid
_{\psi^{-1}(p)})=1$ for a very general  $p\in \pc $ so, by
semicontinuity, $h^0(\psi^{-1}(p), \fc (\alpha)\mid
_{\psi^{-1}(p)})\geq 1$,  $\forall p\in \pc$. In order to prove
the Claim it is then sufficient to prove that $h^0(\psi^{-1}(p),
\fc (\alpha)\mid _{\psi^{-1}(p)})< 2$,  $\forall p\in \pc$. Set
$S'=\psi^{-1}(p)$ and assume by contradiction $h^0(S', \fc
(\alpha)\mid _{S'})\geq 2$. From the short exact sequence
$$
0\to \fc \mid_{S'}(\alpha -1) \to \fc \mid_{S'}(\alpha ) \to \fc \mid_{C}(\alpha ) \to 0
$$
and taking into account ($\ref{restrictiontoC}$) we get $h^0(S',
\fc (\alpha-1)\mid _{S'})\not =0.$ Set
$\overline{\alpha}:=min\{\beta \in \mathbb{N}: \hskip1mm h^0(S',
\fc (\beta)\mid _{S'})\not =0 \}\leq \alpha-1 $. From the short
exact sequence
$$
0\to \fc \mid_{S'}(\overline{\alpha} -1) \to \fc \mid_{S'}(\overline{\alpha} ) \to \fc \mid_{C}(\overline{\alpha} ) \to 0
$$
and by the definition of $\overline{\alpha}$ we find $h^0(C,\fc
\mid_{C}(\overline{\alpha} ))\not=0$ which contradicts
($\ref{restrictiontoC}$) since $\overline{\alpha}<\alpha$. The
claim is so proved.
\par
By Grauert's theorem \cite[Corollary 12.9]{Hartshorne},  $\psi_* \phi^* \fc(\alpha)$ is an invertible sheaf on $\pc $.
On the other hand, since $\phi^{-1}C= C\times \pc $, we have
$$
\psi_*(\phi^*(\fc (\alpha ))\mid _{\phi^{-1}C})\simeq H^0(C,\fc (\alpha)\mid _C)\otimes \oc _{\pc}
\simeq \oc _{\pc}.
$$
Finally, the natural restriction
$H^0(\psi ^{-1}(p), \fc (\alpha)\mid _{\psi ^{-1}(p)})\to H^0(C, \fc(\alpha )\mid _C)$ is an isomorphism $\forall p\in \pc$,
so the natural map
$\psi_*\phi^*(\fc (\alpha )) \to
\psi_*(\phi^*(\fc (\alpha ))\mid _{p^{-1}C})\simeq  \oc _{\pc}$
is an isomorphism of invertible sheaves on $\pc $. Then we have
$$H^0(\tT,\phi^*(\fc (\alpha ))=H^0(\pc,\psi_*(\phi^*(\fc (\alpha ))= H^0(\pc,\oc_{\pc})=\mathbb{C}.$$
We conclude  by means of the projection formula, because
$\phi_*\oc _{\tT}\simeq \oc_T$.
\end{proof}

\begin{remark}
\label{propalpha}
\begin{enumerate}
\item By  Lemma \ref{1section}, the coefficient $\alpha$
arising in Proposition \ref{bogomolov} is the least twist of $\fc$ admitting a section.
\item The proof of Proposition \ref{bogomolov} shows that  the zero locus of the section of $\fc(\alpha)$
has the right dimension, because it does not meet the general curve $C$.
\end{enumerate}
\end{remark}

\begin{proof}[Proof of Theorem \ref{main1}.]
In this proof we closely follow \cite{RV}.

By Proposition \ref{sc} there exists a normalized reflexive sheaf $\fc $ (on $T$) such that
$$
0 \to \oc_T \to \fc(k) \to \ic _X(e-t) \to 0
$$
($c_1(\fc )=cH$, $c_2(\fc )=[X]-(ck+k^2)H^2$ and $c+2k=e-t$).
Set
\begin{itemize}
\item let $\alpha $ and $\beta $ be the smallest degrees of two independent generators of $H^0_*\fc$
(compare with \cite[p. 103]{RV}) ,
\item $s=min \{r: \hskip1mm h^0(\ic_{X,T}(r))\not =0 \}$.
\end{itemize}
We distinguish two cases depending on whether  the discriminant of $\fc $ is $\leq 0$ or $>0$.
\par $d(c_1(\fc )\cdot c_1(\fc)) \leq 4d(c_2(\fc ))$. This case is the simplest one because the expression
$d(X)-d(T)h(e'-h-t)$ is the degree of the second Chern class of
$\fc(k-h)$. Since the discriminant is $\leq 0$, the second Chern
class is always positive and we are done. \par $d(c_1(\fc )\cdot
c_1(\fc)) > 4d(c_2(\fc ))$. In this case Proposition
\ref{bogomolov} implies $\alpha\leq 0$ ($<0$ if $c=0$).
Furthermore, Remark \ref{propalpha} (2) says that the
corresponding section vanishes in the right dimension. Then
$d(c_2(\fc (\alpha))=d(c_2(\fc (-\alpha-c))\geq 0$ and the degree
of the  second Chern class is positive for any twist $\leq \alpha
$ or $\geq -\alpha-c$. If $k=\alpha $ then $s=\beta +\alpha +c$
and the expression $d(X)-d(T)h(e'-h-t)$ is the degree of
$c_2(\fc(k-h))=c_2(\fc (h-\alpha-c))$ which is strictly positive
since $h>0$. So the inequality is proved and the equality cannot
be attained. If $k\geq \beta $ then $s= \alpha +k+c$ and the
expression $d(X)-d(T)h(e'-h-t)$ is the degree of
$c_2(\fc(k-h))=c_2(\fc (\alpha-(s-h)))$. So the inequality is
proved and the equality can be attained only if $s=h$ and the
degree of $c_2(\fc (\alpha))$ vanishes. \end{proof}

\vskip3mm
\begin{proof}[Proof of Theorem \ref{Main} (1)]
It suffices to apply Theorem \ref{main1} to the complete
intersection $T:=F\cap G$, which is aCM with $dimT= 3$ and
$\omega_T\simeq \oc_T(n+k-6)$. The hypotheses that $T$ is
factorial and smooth in codimension $2$ and that the very general
surface section of $T$ is factorial follow from Proposition
\ref{prop}.
\end{proof}


\begin{thebibliography}{s=5}

\bibitem{BF} Beorchia, V. - Franco, D.: {\it On the moduli space of 't Hooft bundles}. Ann. Univ. Ferrara Sez. VII {\bf 47}, 253-268, (2001)


\bibitem{SpecialityI} Di Gennaro, V. - Franco, D.: {\it A speciality for curves in $\Pcq$}. Geom. Dedicata {\bf 129}, 89-99, (2007)

\bibitem{CCM} Di Gennaro, V. - Franco, D.: {\it Factoriality and N\'eron-Severi groups}.
Commun.  Contemp. Math. {\bf 10}, No 5, 745-764, (2008)

\bibitem{Monodromy} Di Gennaro, V. - Franco, D.: {\it Monodromy of a family of hypersurfaces}.
Ann. Sci. \'Ec. Norm. Sup\'er., $4^{e}$ s\'erie, {\bf 42}, No 3, , 517-529,
(2009)

\bibitem{IJM} Di Gennaro, V. - Franco, D.: {\it Noether-Lefschetz Theory and N\'eron-Severi group}.
Int. J. Math. {\bf 23}, No 1, Article ID 1250004, 12 p. (2012)


\bibitem{RCMP} Di Gennaro, V. - Franco, D.: {\it Noether-Lefschetz theory with base locus}.
Rend. Circ. Mat.  Palermo (2) {\bf 63}, No 3, 257-276, (2014)

\bibitem{JAG} Ellia Ph. - Franco, D.: {\it On codimension two subvarieties of $\Pcq$ and $\Psx$}. J. Algebraic Geom. {\bf 11}, No 3, 513-533 (2002)

\bibitem{EFGI} Ellia Ph. - Franco, D. - Gruson L.: {\it On subcanonical surfaces of $\Pq$}. Math. Z. {\bf 251}, No 2, 257-265 (2005)

\bibitem{EFGII} Ellia Ph. - Franco, D. - Gruson, L.: {\it Smooth divisors of projective hypersurfaces}. Comment. Math. Helv. {\bf 83}, No 2, 371-385 (2008)

\bibitem{Vogel} Flenner, H. - O'Carroll L. - Vogel W.: {\it Joins and
intersections}. Springer-Verlag, 1999.

\bibitem{FKL} Franco, D. - Kleiman S. L. - Lascu, A.T.: {\it Gherardelli Linkage and Complete Intersections}. Mich. Math. J. {\bf 48}, Spec. Vol., 271-279, (2000)

\bibitem{FultonIT} Fulton, W.: {\it Intersection theory}. Ergebnisse
der Mathematik und ihrer Grenzgebiete; 3.Folge, Bd. 2,
Springer-Verlag 1984.

\bibitem{GP} Gruson, L.-Peskine, Ch.: {\it Genre des courbes
dans l'espace projectif}.  Algebraic Geometry: Proceedings,
Norway, 1977, Lecture Notes in Math., Springer-Verlag, New York
687, 31-59 (1978).

\bibitem{GPI} Gruson, L.-Peskine, Ch.: {\it Th\'eor\`eme de sp\'ecialit\'e}. Ast\'erisque, 71-72, 219-229 (1980)

\bibitem{Hartshorne} Hartshorne, R., {\it Algebraic Geometry}.  Graduate Texts in Mathematics, {\bf 52}, Springer Verlag (1977)

\bibitem{Hrs} Hartshorne, R., {\it Stable reflexive sheaves}.  Math. Ann. {\bf 254}, 121-176 (1980)


\bibitem{PS} Peskine, Ch. - Spiro, L.: {\it Liaison des vari\'et\'es alg\'ebriques}. Invent. Math. {\bf 26}, No 1, 271-302 (1974)

\bibitem{RV} Roggero, M. - Valabrega, P.: {\it The speciality Lemma, rank 2 bundles and Gherardelli-type theorems for surfaces in $\Pq$}.  Compositio Math.
{\bf 139}, 101-111 (2003)

\bibitem{Serre} Serre, J. P.: {\it Alg\`ebre Locale - Multiplicity\'es} Springer LNM {\bf 11}, (1965)

\end{thebibliography}
\end{document}